\newtheorem{theorem}{Theorem}[section]
\newtheorem{lemma}[theorem]{Lemma}
\theoremstyle{definition}
\DeclareMathOperator{\arccot}{arccot}
\begin{document}

\title[Best constants in reverse inequalities for analytic and co-analytic projections]
{Best constants in reverse Riesz-type inequalities for analytic and co-analytic projections}

\author{Petar  Melentijevi\'{c}}
\address{
Faculty of  Mathematics\endgraf
University of Belgrade\endgraf
Studentski trg 16\endgraf
11000 Beograd\endgraf
Serbia\endgraf}
\email{petarmel@matf.bg.ac.rs}

\begin{abstract}
Let $P_+$ be the Riesz's projection operator and let  $P_-= I - P_+$. We consider the inequalities of the following form
$$ \|f\|_{L^p(\mathbb{T})}\leq B_{p,s}\|( |P_ +  f | ^s + |P_-  f |^s)
^{\frac 1s}\|_{L^p (\mathbb{T})} $$
and prove them with sharp constant $B_{p,s}$ for $s \in (0,1]\cup [p',+\infty)$ and $1<p\leq 2$ or $p\geq 4,$ where $p':=\min\{p,\frac{p}{p-1}\}.$ 
\end{abstract}

\subjclass[2010]{Primary 30H10,  30H05; Secondary 31A05,  31B05}

\keywords{Riesz projection, Plurisubharmonic minorants, sharp inequalities}

\thanks{The author is partially supported by MPNTR Serbia grant no. 174017}
\maketitle

\section{Introduction and main results}

Let $\mathbb{T}=\{z\in \mathbb{C}:|z|=1\}$  be the unit circle in the complex plane. For  $0<p\le\infty,$ we denote by $L^p(\mathbb{T})$ the  Lebesgue space of $p-$integrable functions on $\mathbb{T}$ and by $H^p(\mathbb{T})$ its subspace that contains all $\varphi  \in  L^p(\mathbb{T})$   for which all negative Fourier coefficients are equal to zero, i.e.,
\begin{equation*}
\hat{\varphi}(n)  =  \frac{1}{2\pi}\int_{0}^{2\pi} \varphi (e^{it})e^{-int} dt = 0\quad \text{for every  integer}\  n<0.
\end{equation*}
The Riesz projection  operator $P_+:L^p(\mathbb{T})\to H^p(\mathbb{T})$ is defined as:
\begin{equation*}
P_+f (\zeta)  =  \sum_{n=0}^{+\infty}      \hat{f}(n)  \zeta^n,
\end{equation*}
for $f(\zeta)= \sum_{n =   -\infty}^{+\infty} \hat{f}(n) \zeta^n.$ Its complement to the identity operator, i.e. $P_-=I-P_+$ is called co-analytic projection.  

Let us denote the unit disk by $\mathbb{D}=\{z\in \mathbb{C}:|z|<1\}.$ For a function  $f$ on $\mathbb{D}$ and $r\in (0,1)$ we consider dilates $f_r$ defines as  $f_r (\zeta) =  f(r\zeta)$, $\zeta\in\overline{\mathbb{D}}$. The
harmonic Hardy space $h^p$, $0<p\le \infty$, consists of all harmonic complex-valued functions  $\phi$  on $\mathbb{D}$ for which the integral mean
\begin{equation*}
M_p(f,r)  = \left\{ \int_\mathbb{T} |f_r(\zeta)|^p\frac{|d\zeta|}{2\pi}\right\}^{\frac 1p}
\end{equation*}
remains bounded as $r$ approaches $1$.  Since $|f|^p$, $1\le p<\infty$ is  subharmonic on  $\mathbf{U}$, the integral mean $M_p(f,r)$ is increasing in
$r$. The norm  on $h^p$ in this case is given by
\begin{equation*}
\|f\|_p = \lim_{r\rightarrow 1} M_p(f,r).
\end{equation*}
The analytic Hardy space $H^p$ is the subspace of $h^p$ that contains analytic functions. For the  theory of  Hardy spaces in the unit disk we refer to
classical books \cite{DUREN.BOOK, GARNETT.BOOK, RUDIN.BOOK} and  Pavlovi\'{c} book \cite{PAVLOVIC.BOOK} for more recent results. For holomorphic $f$,  $|f|^p$ is subharmonic for every $0<p<\infty$,  
\begin{equation*}
\|f\|_p  = \lim_{ r \rightarrow 1}  M(r,f)
\end{equation*}
defines a norm on $H^p$ for every $0<p<+\infty.$
For $p=\infty$ the space $H^\infty$ contains all bounded analytic functions in  the unit disc.

For $f\in H^p$ the radial boundary value  $f^{\ast}(\zeta)= \lim_{r\rightarrow 1-} f(r\zeta)$ exists for almost every
$\zeta\in \mathbb{T}$,  $f^\ast$ belongs to the space  $H^p(\mathbb{T})$, and  we have the isometry relation $\|f^{\ast}\|_{L^p(\mathbb{T})} =
\|f\|_{H^p}$ by Fatou's theorem.  Moreover, given $\varphi \in H^p (\mathbb{T})$, the Cauchy (and the  Poisson)  extension  of it  gives  a function  $f$ in  $H^p$ such
that   $f^\ast (\zeta ) = \varphi(\zeta)$ for almost every $\zeta\in \mathbb{T}$. Therefore, one may identify the spaces $H^p(\mathbb{T})$  and  $H^p$
via the   isometry $f\rightarrow f^\ast$.  If we identify  the Hardy space $H^p$ and the space  $H^p(\mathbb{T})$, then the Riesz operator  $P_+$
may be  represented as the  Cauchy integral (with density $f^\ast$)
\begin{equation*}
P_+ f (z) =  \frac{1}{2\pi i} \int_{\mathbb{T}} \frac{f^\ast(\zeta)}{\zeta - z} d\zeta, \quad z \in \mathbb{D}.
\end{equation*}

 It is a classical result proved by Marsel Riesz that  $P_+$ is a bounded operator on $L^p(\mathbb{T})$ for every $1<p<\infty$. The problem of finding the exact norm is more demanding, and after some partial results by Gohberg and Krupnik (\cite{GOHBERGKRUPNIK}) and Verbitsky (\cite{VERBITSKY.ISSLED}), was finally solved in 2000 by Hollenbeck and Verbitsky (\cite{HV.JFA}). They used the method of plurisubharmonic minorants and proved slightly stronger inequality 
$$\|  \max\{ |P_ +  f | ,|P_-  f |\}   \|_{L^p (\mathbb{T})}\le \frac{1}{\sin\frac{\pi}{p}}\|f\|_{L^p (\mathbb{T})},
\quad f\in {L^p (\mathbb{T})}, \,    1<p<2.$$
This proves result for $1<p<2,$ while the case $p>2$ follows by duality argument. Later on, the same authors proved the analogous inequality for $p>2$ in \cite{HV.OTAA}. In the same paper, they   
posed a problem to find the optimal constant $A _{p,s }$  for the inequality
\begin{equation*}
\|  ( |P_ +  f | ^s  +  |P_-  f |^s) ^{\frac 1s}  \|_{L^p (\mathbb{T})}\le A_{p,s } \|f\|_{L^p (\mathbb{T})},
\quad f\in {L^p (\mathbb{T})}, \,    1<p<\infty, \, 0<s<\infty.
\end{equation*}
Note that Hollenbeck-Verbitsky result from \cite{HV.JFA} gives $A_{p, \infty }  = \frac {1} {\sin  \frac  \pi p}.$\\
 In \cite{KALAJ.TAMS} Kalaj confirmed this conjecture for $s=2.$ The author, in his thesis \cite{MELENTIJEVIC.PHD}, had realized that inequalities in \cite{KALAJ.TAMS} can be proved using a different method. It has turned out that this approach can be extended and in \cite{PETARMARIJAN}, the author with Markovi\'{c} proved these inequalities for $s\leq\max\{\frac{p}{p-1}, p\}$ and $1<p\leq\frac{5}{4}$ or $p\geq 2.$ Recently, in \cite{PETAR}, this conjecture is proved in full generality for $p\geq 2$ and $s\leq\frac{1}{\cos^2\frac{\pi}{2p}}$ for $1<p\leq \frac{4}{3}.$ It is also proved that in the second case the result cannot achieved in the same way. Very recently, in \cite{JAGUZOVIC}, this conjecture is verified for $\frac{4}{3}\leq p\leq 2$ and $s\leq \frac{p}{p-1}.$ 
By Banach homeomorphism theorem, there are the constants $B_{p,s}$ such that 
$$\|f\|_{L^p (\mathbb{T})}\leq B_{p,s } \|  ( |P_ +  f | ^s  +  |P_-  f |^s) ^{\frac 1s}  \|_{L^p (\mathbb{T})},
\quad f\in {L^p (\mathbb{T})}, \,    1<p<\infty, \, 0<s<\infty.$$
In this paper, we will estimate $B_{p,s}$ from below on the whole range and find its exact value for $s \in (0,1]$ and every $1<p<+\infty$ and $s\geq\min\{p,\frac{p}{p-1}\}$ and $p \in (1,2] \cup [4,+\infty).$ 
More precisely, the main result of this paper is given by the following theorem.
\begin{theorem}
1) For $1<p\leq 2$ and $s\geq p,$ there holds the sharp inequality:
$$\|f\|_{L^p (\mathbb{T})}\leq 2^{1-\frac{1}{s}}\sin\frac{\pi}{2p} \|  ( |P_ +  f | ^s  +  |P_-  f |^s) ^{\frac 1s}  \|_{L^p (\mathbb{T})},
\quad f\in {L^p (\mathbb{T})},$$
2) If $p\geq 4$ and $s\geq \frac{p}{p-1},$ we have:
$$\|f\|_{L^p (\mathbb{T})}\leq 2^{1-\frac{1}{s}}\cos\frac{\pi}{2p} \|  ( |P_ +  f | ^s  +  |P_-  f |^s) ^{\frac 1s}  \|_{L^p (\mathbb{T})},
\quad f\in {L^p (\mathbb{T})},$$
3) For $s \in (0,1]$ and any $1<p<+\infty,$ we have:
$$\|f\|_{L^p (\mathbb{T})}\leq \|  ( |P_ +  f | ^s  +  |P_-  f |^s) ^{\frac 1s}  \|_{L^p (\mathbb{T})},
\quad f\in {L^p (\mathbb{T})}.$$
\end{theorem}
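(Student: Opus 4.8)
The plan is to dispose of part 3 by an elementary pointwise comparison, and to obtain parts 1 and 2 by the subharmonic–minorant scheme of the papers cited in the introduction, the genuinely new content being a sharp two–variable inequality.

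\textbf{Part 3.} For $0<s\le 1$ the map $t\mapsto t^{s}$ is subadditive on $[0,\infty)$, so $(a^{s}+b^{s})^{1/s}\ge a+b$ for $a,b\ge 0$. Applying this with $a=|P_{+}f|$, $b=|P_{-}f|$ gives, pointwise on $\mathbb T$,
\begin{equation*}
\bigl(|P_{+}f|^{s}+|P_{-}f|^{s}\bigr)^{1/s}\ \ge\ |P_{+}f|+|P_{-}f|\ \ge\ |P_{+}f+P_{-}f|=|f|,
\end{equation*}
so that $B_{p,s}\le 1$; taking $f$ analytic (whence $P_{-}f=0$ and the two sides agree) gives $B_{p,s}\ge 1$, hence $B_{p,s}=1$.

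\textbf{Parts 1--2, the inequality.} Put $u=P_{+}f$, $v=P_{-}f$; after multiplying $f$ by a unimodular constant we may assume $\widehat f(0)\ge 0$. The structural fact to be exploited is that on $\mathbb T$ the function $u$ is a boundary value of a function holomorphic on $\mathbb D$ and $\overline v$ a boundary value of a holomorphic function vanishing at $0$; thus for a holomorphic $\Psi$ of the shape $\Psi(u,\overline v)=(\alpha u+\beta\overline v)^{p}$ (or a finite combination of such) the composition $\Psi(P_{+}f,\overline{P_{-}f})$ is a holomorphic boundary value whose value at the origin is $(\alpha\widehat f(0))^{p}\ge 0$ when $\alpha>0$, so that $\int_{\mathbb T}\operatorname{Re}\Psi(P_{+}f,\overline{P_{-}f})\,|d\zeta|/2\pi\ge 0$. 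It therefore suffices to prove the pointwise inequality
\begin{equation*}
|u+v|^{p}\ \le\ B_{p,s}^{p}\,\bigl(|u|^{s}+|v|^{s}\bigr)^{p/s}-\operatorname{Re}\Psi(u,\overline v)\qquad(u,v\in\mathbb C)
\end{equation*}
with $B_{p,s}$ and the parameters of $\Psi$ as claimed; integrating this over $\mathbb T$ yields $\|f\|_{p}\le B_{p,s}\|(|P_{+}f|^{s}+|P_{-}f|^{s})^{1/s}\|_{p}$. By homogeneity the displayed inequality reduces to an estimate in $|u|/|v|$ and the arguments of $u,v$; here the hypothesis $s\ge p'$ controls the convexity of $t\mapsto t^{p/s}$ (it places the $\ell^{s}$–mean of $(|u|,|v|)$ correctly against the quadratic one used to expand $|u+v|^{p}$), and the hypothesis $p\le 2$ or $p\ge 9$ is what makes the inequality hold in a neighbourhood of the degenerate configuration $|u|=|v|$, where the trivial bound $|u+v|\le 2^{1-1/s}(|u|^{s}+|v|^{s})^{1/s}$ is attained and where — exactly as the introduction records for the companion constants $A_{p,s}$ — the method is known to break down for intermediate exponents. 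This last verification is the hard part of the whole argument: one must choose $\alpha,\beta$ (and decide whether a single term suffices), and then check the sharp two–variable inequality, which I expect to require a case analysis in the two angular variables, the convexity of $t\mapsto t^{p/2}$ and $t\mapsto t^{p/s}$, and an exact fit of $\alpha,\beta$ to the extremal configuration, with the admissible $p$–range pinned down by a second–order computation at the extremizer. (One also has the soft bound $B_{p,s}\le A_{q,r}$, $q,r$ the conjugate exponents of $p,s$, obtained by dualizing $|\int_{\mathbb T}f\overline g|\le\|(|P_{+}f|^{s}+|P_{-}f|^{s})^{1/s}\|_{p}\,\|(|P_{+}g|^{s'}+|P_{-}g|^{s'})^{1/s'}\|_{p'}$ against the known forward inequalities; but since $A_{q,r}\to\infty$ as $q\to\infty$, this cannot yield the sharp constant as $p\to 1^{+}$, so a direct argument is unavoidable.)

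\textbf{Parts 1--2, sharpness.} For the matching lower bound one tests on families modelled on the Riesz–projection extremizers. Take $u_{\gamma}(\zeta)=(1-\zeta)^{-\gamma}$ and $v_{\gamma,\tau}(\zeta)=e^{-i\tau}\bigl((1-\overline\zeta)^{-\gamma}-1\bigr)$ with $\gamma\in(0,1/p)$, $\tau\in[-\pi,\pi]$, so that $f=u_{\gamma}+v_{\gamma,\tau}$ satisfies $P_{+}f=u_{\gamma}$, $P_{-}f=v_{\gamma,\tau}$, and both $|P_{+}f|$ and $|P_{-}f|$ equal $|1-\zeta|^{-\gamma}(1+o(1))$ near $\zeta=1$. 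Letting $\gamma\uparrow 1/p$, the singularity at $\zeta=1$ dominates $\|f\|_{p}$, $\|P_{+}f\|_{p}$ and $\|(|P_{+}f|^{s}+|P_{-}f|^{s})^{1/s}\|_{p}$, and the ratio $\|f\|_{p}/\|(|P_{+}f|^{s}+|P_{-}f|^{s})^{1/s}\|_{p}$ tends to
\begin{equation*}
2^{\,1-\frac1s}\Bigl(\tfrac12\bigl(\bigl|\cos\tfrac{\pi/p+\tau}{2}\bigr|^{p}+\bigl|\cos\tfrac{\pi/p-\tau}{2}\bigr|^{p}\bigr)\Bigr)^{1/p}.
\end{equation*}
The supremum over $\tau$ is $2^{1-1/s}\cos\frac{\pi}{2p}$, attained at $\tau=0$, and $\tau=0$ is a genuine maximum precisely for $p$ in the range of part 2, which gives the bound there. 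For $1<p\le 2$ the maximum is attained at $\tau\ne 0$ and one must refine the family — letting $\gamma$ vary over $(0,1/p)$ rather than tend to $1/p$, equivalently building the singularity into both $u$ and $v$ through an inner factor so that $|P_{+}f|=|P_{-}f|$ globally — which produces $2^{1-1/s}\sin\frac{\pi}{2p}$. In either case this matches the upper bound and determines $B_{p,s}$.
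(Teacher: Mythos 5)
Your Part~3 argument is correct and is essentially what the paper does: for $0<s\le1$ the map $t\mapsto t^{s}$ is subadditive, giving $(|P_{+}f|^{s}+|P_{-}f|^{s})^{1/s}\ge|P_{+}f|+|P_{-}f|\ge|f|$ pointwise, and the analytic test function $f\in H^{p}$ gives equality, so $B_{p,s}=1$.

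For Parts~1--2, however, what you have written is a road map, not a proof, and you say so yourself (``This last verification is the hard part of the whole argument''). The paper's actual content is precisely that verification. Two concrete gaps. First, the minorant: you propose $\Psi(u,\overline v)=(\alpha u+\beta\overline v)^{p}$ (or a combination of such), but the minorant that makes the method close is not of this shape. The paper takes $\Phi_{1}(z,w)=|zw|^{p/2}\cos\frac{p(\pi-t-u)}{2}$ (Lemma 3.1) and $\Phi_{2}(z,w)=|zw|^{p/2}v_{p}(t+u)$ (Lemma 3.2), i.e.\ $\Re$ of a fractional power of the \emph{product} $zw$ (with a cut-off modification $v_p$ in the second case), whose plurisubharmonicity was established by Kalaj and which gives the nonnegative integral; a single ``holomorphic boundary value, mean value at $0$'' argument applied to $(\alpha u+\beta\overline v)^{p}$ both chooses the wrong object and runs into branch issues for nonintegral $p$ when $\alpha u+\beta\overline v$ has zeros in $\mathbb D$. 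Second, the two-variable inequality you would need to verify is exactly the content of Lemmas 3.1 and 3.2, and the paper spends Sections~4--6 proving it: a reduction by homogeneity to $z=1$, $w=re^{it}$, an analysis of interior stationary points showing $\Phi(r,t)\le0$ there when $s\ge p$ (resp.\ $s\ge p'$), and then a separate boundary analysis, with the case $t=0$ requiring two further sections of delicate estimates (this is where the restriction $p\ge9$ in Part~2 comes from). None of this appears in your write-up; the convexity heuristics you mention do not substitute for it.

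Your sharpness discussion is also incomplete. The paper uses the family $f_{\gamma}=\alpha\Re g_{\gamma}+i\beta\Im g_{\gamma}$ with $g_{\gamma}=\bigl(\tfrac{1+z}{1-z}\bigr)^{2\gamma/\pi}$ and two real parameters $\alpha,\beta$, reducing to the one-variable calculus problem of minimizing $(1+t^{s})^{1/s}/\sqrt{1+t^{2}\pm2t\cos\frac\pi p}$ over $t\in[0,1]$. Your family fixes $|P_{+}f|=|P_{-}f|$ at the singularity and varies a phase $\tau$; that can give Part~2's constant at $\tau=0$, but for Part~1 you concede the extremizer does not sit there and appeal to an unspecified ``refinement by an inner factor,'' so the matching lower bound $2^{1-1/s}\sin\frac{\pi}{2p}$ is not actually produced. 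In short, the architecture you describe is recognisably the paper's, but the proposal leaves out exactly the parts the paper had to prove.
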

It is easy to see that 3) is trivial, an explanation will be given in the next section. For 1) and 2), we note that we extends the results from \cite{KALAJ.TAMS}, where these inequalities are proved for $s=2.$ Similar inequalities, with additional parameter and for $1<p<2$ are recently proved in \cite{KALAJ.POTA}.\\

Interested reader can easily formulate the appropriate analogues for half-line or half-space multipliers or analytic martingales following the corresponding results from \cite{PETAR}.\\

In section 2 we will give estimates for $B_{p,s}$ using well-known family of nearly-extremal functions. Section 3 contains proof of our theorem modulo ''elementary'' inequalities for complex numbers. Further reduction of these inequalities is given in section 4, while in sections 5 and 6 we give proofs of these inequalities in the ''hardest'' case - that of $t=0.$

\section{Lower estimates of the constants -- the family of test functions}

In this section we give the estimate for the constants $B _{p,s}$ in the inequalities

\begin{equation*}
\|f\|_{L^p(\mathbb{T})} \le B _{p,s } \|\left(|P_+f|^s+|P_-f|^s \right)^{1/s}\|_{L^p(\mathbb{T})}
\end{equation*}
from below.

We will use the family of test functions defined by
\begin{equation*}
f_{\gamma}= \alpha \Re g_{\gamma} + i \beta \Im g_{\gamma},
\end{equation*}
where $g_{\gamma}(z)= \left(\frac{1+z}{1-z}\right)^{\frac{2\gamma}{\pi}}$. Note that  $|\Re g_{\gamma}|=
\tan \gamma |\Im g_{\gamma}|.$  Hence, following the calculation in \cite{HV.OTAA} or \cite{PETAR}, for  $\gamma$ tends to $\frac{\pi}{p},$ we have:
\begin{equation*}
\frac{\|\left(|P_+f|^s+|P_-f|^s \right)^{\frac{1}{s}}\|_{L^p(\mathbb{T})}}{\|f\|_{L^p(\mathbb{T})}}=
\frac{\left(|\alpha+\beta|^s+|\alpha-\beta|^s\right)^{\frac{1}{s}}}{2\left(\alpha^2 \cos^2 \frac{\pi}{2p}
	+ \beta^2 \sin^2 \frac{\pi}{2p}\right)^{\frac{1}{2}}}.
\end{equation*}

Therefore, we easily conclude that
\begin{equation*}\begin{split}
B_{p,s}  & \ge \sup_{\alpha,\beta \in \mathbb{R}} \frac{2\big(\alpha^2 \cos^2 \frac{\pi}{2p} + \beta^2 \sin^2 \frac{\pi}{2p}\big)^{\frac{1}{2}}}{\big(|\alpha+\beta|^s+|\alpha-\beta|^s\big)^{\frac{1}{s}}}
\\ & =  \bigg(\inf_{\alpha,\beta \in \mathbb{R}}
\frac{\big(|\alpha+\beta|^s+|\alpha-\beta|^s\big)^{\frac{1}{s}}}
{2\big(\alpha^2 \cos^2 \frac{\pi}{2p} + \beta^2 \sin^2\frac{\pi}{2p}\big)^{\frac{1}{2}}}\bigg)^{-1}.
\end{split}\end{equation*}
Naturally, we analyze the function
\begin{equation*}
T(\alpha,\beta)= \frac{\big(|\alpha+\beta|^s+|\alpha-\beta|^s\big)^{\frac{1}{s}}}{2\big(\alpha^2 \cos^2 \frac{\pi}{2p} + \beta^2 \sin^2 \frac{\pi}{2p}\big)^{\frac{1}{2}}}.
\end{equation*}
This function is even in both variables, and we can assume, without loss of generality, that $|\alpha+\beta| \ge  |\alpha-\beta|$. Also,
it is homogeneous, and we can set $\alpha+\beta=1$ and $\alpha-\beta=t,$ with $|t|\leq 1,$ by the  previous assumption. Therefore, we get:
\begin{equation*}\begin{split}
\frac{\big(|\alpha+\beta|^s+|\alpha-\beta|^s\big)^{\frac{1}{s}}}{2\big(\alpha^2 \cos^2 \frac{\pi}{2p} + \beta^2 \sin^2
	\frac{\pi}{2p}\big)^{\frac{1}{2}}}&
=\frac{\big(1+|t|^s\big)^{\frac{1}{s}}}{2\big ((\frac{1+t}{2})^2 \cos^2 \frac{\pi}{2p} + (\frac{1-t}{2})^2 \sin^2 \frac{\pi}{2p}\big)^{\frac{1}{2}}}
\\&=\frac{\big(1+|t|^s\big)^{\frac{1}{s}}}{\sqrt{1+t^2+2t\cos \frac{\pi}{p}}}.
\end{split}\end{equation*}
Hence, the range of the function $T(\alpha,\beta)$ is the union of the values of the expressions:
\begin{equation*}
\frac{\big(1+t^s\big)^{\frac{1}{s}}}{\sqrt{1+t^2+2t\cos \frac{\pi}{p}}}
\end{equation*}
and
\begin{equation*}
\frac{\big(1+t^s\big)^{\frac{1}{s}}}{\sqrt{1+t^2-2t\cos \frac{\pi}{p}}}
\end{equation*}
for $0 \leq   t \leq  1$.

Since, for $1<p\leq 2$:
\begin{equation*}
\sqrt{1+t^2-2t\cos \frac{\pi}{p}} \geq \sqrt{1+t^2+2t\cos \frac{\pi}{p}},
\end{equation*}
we conclude:
\begin{equation*}
B_{p,s} \geq \left(\inf_{t \in [0,1]} \frac{\big(1+t^s\big)^{\frac{1}{s}}}{\sqrt{1+t^2-2t\cos \frac{\pi}{p}}}\right)^{-1}.
\end{equation*}

 We consider the function $G(t)=\frac{(1+t^s)^2}{(1+t^2-2t\cos\frac{\pi}{p})^s}$ for $0\leq t \leq 1.$
A straighforward calculation gives$G'(t)=\frac{2s(1+t^s)}{(1+t^2-2t\cos\frac{\pi}{p})^{s+1}}\big[t^{s-1}-t+\cos\frac{\pi}{p}(1-t^s)\big]$ and we have
$\mathrm {sgn} G'(t)   = \mathrm {sgn}\Psi(t),$
where $\Psi(t)= t^{s-1}-t+\cos\frac{\pi}{p}(1-t^s).$ Now, from $\Psi'(t)= (s-1)t^{s-2}-1-st^{s-1}\cos\frac{\pi}{p}$ and $\Psi''(t)=(s-1)t^{s-3}\big[s-2-ts\cos\frac{\pi}{p}\big]$ we make difference between these three cases:

1) For $\frac{1}{\cos^2\frac{\pi}{2p}}\leq s<2,$ we have $s-2-s\cos\frac{\pi}{p}>0$ and also $s>1,$ so $s-2-ts\cos\frac{\pi}{p}$ increases on $t$
and, since $s-2<0$ and $s-2-s\cos\frac{\pi}{p},$ we see that for $t_0=\frac{s-2}{s\cos\frac{\pi}{p}}$ we have $\Psi''(t)<0$ for $t\in
(0,t_0)$ and $\Psi''(t)>0$ for $t\in (t_0,1).$ Hence, $\Psi'$ decreases on
$(0,t_0)$ and increases on $(t_0,1),$ hence $\Psi'(t_0)$ is the minimum of $\Psi'$ on $(0,1).$ But, $\Psi'(t_0)=(s-1)t_0^{s-2}-1-st_0^{s-1}\cos\frac{\pi}{p}=(s-1)t_0^{s-2}-1-(s-2)t_0^{s-2}=t_0^{s-2}-1>0,$ since $s<2$ and $t_0<1$.  This means
that $\Psi'(t)\geq\Psi'(t_0)>0,$ so $\Psi$ increases and $\Psi(t)\leq \Psi(1)=0,$ which implies that $G$ has its minimum for $t=1$ equal to $G(1)=\frac{4^{1-s}}{\sin^{2s}\frac{\pi}{2p}}$ and $B_{p,s}\geq 2^{1-\frac{1}{s}} \sin\frac{\pi}{2p}.$

2) For $s\geq 2,$ we infer that $\Psi''(t)>0 $ for $t \in (0,1),$ function $\Psi$ has its maximum in $t=1$ equal to $0,$ therefore: $B_{p,s}\geq 2^{1-\frac{1}{s}} \sin\frac{\pi}{2p}.$

3) For $1<s<\frac{1}{\cos^2\frac{\pi}{2p}}$the expression  $s-2-s\cos\frac{\pi}{p}$ is negative which gives $\Psi''(t)<0$ and $\Psi'$ decreases. $\Psi'(1)=s-2-s\cos\frac{\pi}{p}<0,$ while $\lim_{t\rightarrow 0+}\Psi'(t)= +\infty$ so there exists a unique $t_0$ such that $\Psi'(t_0)=0$.
Hence, $\Psi(t)$ increases on $(0,t_0)$ and decreases on $(t_0,1).$ Since $\Psi(0)=\cos\frac{\pi}{p}<0,$ $\Psi(1)=0$ we conclude $\Psi(t_0)>0$
and $\Psi $ has exactly one zero $\tilde{t}$ in which $G(t)$ attains its minimum. Therefore, for
such the point $\tilde{t},$ we have  $B_{p,s} \geq \frac{\sqrt{1+\tilde{t}^2-2\tilde{t}\cos\frac{\pi}{p}}}{(1+\tilde{t}^s)^{\frac{1}{s}}}$.

4)	For $0<s\leq1$ we have $\Psi''(t)>0$ for every $t \in [0,1],$ so $\Psi'$ increases, $\Psi'(t) \leq \Psi'(1)=0$ and $\Psi$ decreases. But,
$\Psi(1)=0$ so $\Psi(t)\geq \Psi(1)=0,$ $G$ is increasing and has minimum in zero. In this case, we conclude $B_{p,s} \geq  1.$ 	In the other hand, since $t\mapsto t^{s}$ is concave function, we have:
$$\|f\|_{L^p(\mathbb{T})}=\|P_+f+P_-f\|_{L^p(\mathbb{T})}\leq \||P_+f|+|P_-f|\|_{L^p(\mathbb{T})}$$$$\leq(|P_+f|^s+|P_-f|^s)^{\frac{1}{s}}\|_{L^p(\mathbb{T})},$$
which gives $B_{p,s}=1.$

The rest easily follows from the simple observation: If $p>2,$ than $\cos\frac{\pi}{p}=-\cos\frac{\pi}{p'}$ for dual exponent $p'=\frac{p}{p-1}<2.$ Consequently, we can replace the appropriate signs and functions sine and cosine.

Case $s\geq 2,$ already follows from Kalaj's paper (\cite{KALAJ.TAMS})(where it was proved for $s=2$) and power-mean inequality, hence, we have $B_{p,s}=2^{1-\frac{1}{s}}\sin\frac{\pi}{2p}$ in that case. However, we will prove stronger result in this paper (with smaller $s$).

\section{Proof of Theorem 1.1}

In this section we will give proof of Theorem 1.1. First, we will formulate two lemmata whose proofs will be given through remaining sections.  

\begin{lemma}
For $s\geq p$ and $1<p\leq 2,$ there holds
\begin{equation}
\label{lemma31}
-\left(\frac{|z|^s+|w|^s}{2}\right)^{\frac ps}+\frac{|z+\overline{w}|^p}{2^p\sin^p \frac{\pi}{2p} } +\cot\frac{\pi}{2p}\Phi_1(z,w)
\leq 0,
\end{equation}
where
$\Phi_1(z,w)=|zw|^{\frac{p}{2}}\cos\frac{p(\pi-|t+u|)}{2}$ and 
$ z=|z|e^{\imath t}\in \mathbb{C},\, w=|w|e^{\imath u}\in \mathbb{C}.$
\end{lemma}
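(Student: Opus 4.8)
The plan is to reduce the two-variable inequality \eqref{lemma31} to a one-variable problem by exploiting the homogeneity and the rotational structure of the left-hand side. First I would normalize: the expression is homogeneous of degree $p$ in $(|z|,|w|)$, so after dividing by $(|z|^s+|w|^s)^{p/s}$ we may assume $|z|^s+|w|^s=2$, or more conveniently parametrize $|z|=r\cos^{2/s}\theta$, $|w|=r\sin^{2/s}\theta$ — actually the cleanest normalization is to fix $|z|^2+|w|^2$ after first observing that only the combination $|z+\overline w|^2=|z|^2+|w|^2+2|zw|\cos(t+u)$ and $\Phi_1$ depend on the phases, and both depend on $t,u$ only through $t+u$. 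So set $\phi=t+u$; the inequality becomes, in the variables $a=|z|,b=|w|\ge0$ and $\phi\in\mathbb R$,
\[
\frac{(a^2+b^2+2ab\cos\phi)^{p/2}}{2^p\sin^p\frac\pi{2p}}+\cot\frac\pi{2p}\,(ab)^{p/2}\cos\frac{p(\pi-\phi)}{2}\le\Big(\frac{a^s+b^s}{2}\Big)^{p/s}.
\]

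Next I would analyze the dependence on $\phi$. The right-hand side is independent of $\phi$, so it suffices to maximize the left-hand side over $\phi$. Writing $A=a^2+b^2$, $B=2ab$ and $C=(ab)^{p/2}$, the left side is $F(\phi)=\frac{(A+B\cos\phi)^{p/2}}{2^p\sin^p\frac\pi{2p}}+C\cot\frac\pi{2p}\cos\frac{p(\pi-\phi)}2$; I expect the maximizing $\phi$ to be governed by a transcendental stationarity condition $F'(\phi)=0$, and the key structural fact (mirroring the "$t=0$" reduction announced in the introduction, which corresponds here to the boundary case $a=b$, i.e. $z=w$ up to phase, or to $\phi$ extremal) is that the worst case occurs either at $\phi$ such that $z+\overline w$ and the Hollenbeck–Verbitsky test configuration align, or at the diagonal $a=b$. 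After fixing the extremal $\phi$, one is left with a single-variable inequality in $\tau=b/a\in[0,1]$ of the shape
\[
\Big(\frac{1+\tau^s}2\Big)^{p/s}\ge \lambda\,(1+\tau^2+2\tau\cos\phi_0)^{p/2}+\mu\,\tau^{p/2},
\]
and here the constant $2^{1-1/s}\sin\frac\pi{2p}$ is exactly calibrated so that equality holds at $\tau=1$ — this is precisely why the test functions in Section 2 are extremal, and it pins down the constant.

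The main obstacle, and where the bulk of the work (Sections 4–6) will go, is proving this final one-variable inequality with the sharp constant, since equality at the endpoint $\tau=1$ means one cannot afford any slack and must control the second-order behaviour there; the hypothesis $s\ge p$ and $1<p\le2$ enters exactly in making the relevant auxiliary function (the analogue of $\Psi$ from Section 2, governing the sign of the derivative) have the right convexity. Concretely I would: (i) reduce to $t=0$ in the paper's notation via a separate lemma handling the "easy" range of the parameter, as the author signals; (ii) in the $t=0$ case, set $g(\tau)=\big(\frac{1+\tau^s}2\big)^{p/s}-\lambda(1+\tau)^p-\mu\tau^{p/2}$ (or the appropriate variant with the cosine), show $g(1)=0$, $g'(1)\le 0$, and then show $g$ is either monotone or has a single interior critical point of the right type by differentiating once more and using $s\ge p$ to sign $g''$; (iii) assemble these to conclude \eqref{lemma31}, and then feed it, together with the analogous Lemma for part 2) and the power-mean/duality observations from Section 2, into the plurisubharmonic-minorant machinery of Hollenbeck–Verbitsky to deduce Theorem 1.1. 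The honest expectation is that step (ii) for $t=0$ is genuinely delicate and is the reason the result is restricted to $1<p\le2$ and $p\ge9$ rather than all $p$.
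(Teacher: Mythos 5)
Your high-level plan tracks the paper's strategy reasonably well: normalize by homogeneity (the paper sets $z=1$, $w=re^{\imath t}$ with $r\in[0,1]$ playing the role of your $\tau$), observe that only $\phi=\arg z+\arg w$ matters, reduce to $s=p$ by the power-mean inequality, then show that the maximum of the relevant function $\Phi(r,t)$ over the $(r,t)$-rectangle is attained on the boundary by combining a monotonicity estimate in $t$ with an analysis of interior stationary points, and finally dispose of the boundary cases, of which the ``$t=0$'' slice is the hard one. You also correctly anticipate that the pointwise inequality is fed into the Hollenbeck--Verbitsky plurisubharmonic-minorant machinery via plurisubharmonicity of $\Phi_1$.

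But what you have written is a roadmap rather than a proof, and some of the signposts are wrong. First, the equality case of the lemma is $|z|=|w|$ with $t+u=\pi-\frac{\pi}{p}$ (the paper's $r=1$, $t=\pi-\frac{\pi}{p}$), not anything on the $t=0$ slice: plugging $r=1$ into the paper's $t=0$ inequality gives $-1+\cot\frac{\pi}{2p}<0$ strictly, so your step (ii) (``show $g(1)=0$, $g'(1)\le0$, sign $g''$'') does not describe the situation. Relatedly, you conflate the $t=0$ boundary (extremal phase, $z$ and $\overline w$ collinear) with the diagonal $|z|=|w|$, which is the separate and easy $r=1$ boundary. Second, the genuinely nontrivial content is entirely missing: the paper must (a) prove $\frac{\sin pt}{\cos t\,\sin^{p-1}t}\ge\frac{\tan\frac{\pi}{2p}}{\sin^p\frac{\pi}{2p}}$ to control $\partial_t\Phi$; (b) combine the two stationarity equations to show that at any interior critical point the sign of $\Phi$ reduces to $\frac{1-r^{2-s}}{r-r^{1-s}}\sin\frac{(\pi-t)p}{2}+\sin\bigl(t+\frac{(\pi-t)p}{2}\bigr)\ge0$, where $s\ge p$ and $1<p\le2$ enter through monotonicity of $\frac{1-r^{2-s}}{r-r^{1-s}}$ in $r$ and concavity of sine; and (c) on the $t=0$ slice run a two-stage argument --- Jensen's inequality with exponent $\frac{p}{2}\le1$ plus monotonicity of a ratio $F(r)$ for $\frac{4}{3}\le p\le2$, and the bound $(1-r)^p\le1-r^p$ followed by completing a square in $r^{p/2}$ for $1<p\le\frac{4}{3}$ --- reducing to two scalar inequalities in $p$ alone, namely $\bigl(1+\cot\frac{\pi}{2p}\bigr)^{1-\frac{p}{2}}\le2^{p-1}\sin^p\frac{\pi}{2p}$ and $1-\frac{4}{4^p\sin^{2p}\frac{\pi}{2p}}\ge\cot^2\frac{\pi}{2p}$, each proved by further convexity and calculus. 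None of this is in your proposal, and the restriction $1<p\le2$ comes not from signing $g''$ via $s\ge p$ but from the Jensen step needing $\frac{p}{2}\le1$ and from being able to take $s=p$. As it stands the proposal identifies the correct shape of the argument but does not prove the lemma.
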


\begin{lemma}For $s\geq \frac{p}{p-1}$ and $p\geq 4,$ there holds the following estimate:
\begin{equation}
\label{lemma32} 
-\left(\frac{|z|^s+|w|^s}{2}\right)^{\frac{p}{s}}+\frac{|z+\overline {w}|^p }{2^p \cos^p\frac{\pi}{2p}} +\tan\frac{\pi}{2p}\Phi_2(z,w)\leq 0,
\end{equation}
where $\Phi_2(z,w)= |zw|^{\frac{p}{2}}v_p (t+u),$ $t= \arg z,$ $u= \arg w$ and
\begin{equation*} v_p (t)  =
\begin{cases}
-\cos\frac{pt}{2}, & \mbox{if}\quad  0\leq |t|\leq\frac{2\pi}{p}; \\
-\cos\frac{p(2\pi-t)}{2}, & \mbox{if}\quad 2\pi-\frac{2\pi}{p}\leq |t|\leq 2\pi;\\
\max\{|\cos\frac{pt}{2}|, |\cos\frac{p(2\pi-t)}{2}|\}, & \mbox{if}\quad  \frac{2\pi}{p} \leq |t|\leq 2\pi-\frac{2\pi}{p}.
\end{cases}
\end{equation*}
\end{lemma}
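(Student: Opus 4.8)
The plan is to prove the pointwise estimate \eqref{lemma32} by first reducing the number of free real variables, then isolating the one genuinely one-dimensional extremal configuration $|z|=|w|$ (the ``$t=0$'' case treated in Sections~5--6), and finally settling that case by elementary calculus, where the hypothesis $p\ge9$ is consumed.

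First I would observe that the left-hand side of \eqref{lemma32} depends on $z,w$ only through $a:=|z|$, $b:=|w|$ and $\sigma:=t+u$, since $|z+\overline w|^2=a^2+b^2+2ab\cos\sigma$ and $\Phi_2(z,w)=(ab)^{p/2}v_p(\sigma)$. The whole expression is homogeneous of degree $p$ in $(a,b)$ and symmetric under $z\leftrightarrow w$, so after a normalization one may take $a=1$, $b\in[0,1]$; the boundary case $b=0$ reduces to the scalar inequality $2^{1-\frac1s}\cos\frac{\pi}{2p}\ge1$, which is elementary for $s\ge\frac{p}{p-1}$, $p\ge9$ (then $2^{1-\frac1s}\ge2^{1/p}$ and $2^{1/p}\cos\frac{\pi}{2p}\ge1$). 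Moreover $v_p$ and $\cos\sigma$ are even and $2\pi$-periodic, hence symmetric about $\sigma=\pi$, so it suffices to take $\sigma\in[0,\pi]$; and since $p\ge9$ forces $\tfrac{2\pi}{p}<\pi$, on this interval $v_p$ consists of just two branches: $v_p(\sigma)=-\cos\frac{p\sigma}{2}$ on $[0,\tfrac{2\pi}{p}]$, and $v_p(\sigma)=\max\{|\cos\frac{p\sigma}{2}|,|\cos\frac{p(2\pi-\sigma)}{2}|\}\le1$ on $[\tfrac{2\pi}{p},\pi]$.

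Next I would carry out the reduction of Section~4: for each fixed $\sigma$, viewing the normalized left-hand side of \eqref{lemma32} as a function of the asymmetry parameter $t$ of the pair $(a,b)$ (with $t=0$ corresponding to $a=b$), a sign analysis of its $t$-derivative shows that the supremum is attained at $t=0$. This collapses \eqref{lemma32} to the one-variable inequality
$$-1+\frac{\cos^{p}(\sigma/2)}{\cos^{p}\frac{\pi}{2p}}+\tan\tfrac{\pi}{2p}\,v_p(\sigma)\le0,\qquad \sigma\in[0,\pi].$$
On $[\tfrac{2\pi}{p},\pi]$ this is immediate: $\cos(\sigma/2)\le\cos\frac{\pi}{p}$ and $v_p(\sigma)\le1$, so the left-hand side is at most $-1+\dfrac{\cos^{p}\frac{\pi}{p}}{\cos^{p}\frac{\pi}{2p}}+\tan\frac{\pi}{2p}$, which is $\le0$ for $p\ge9$. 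The decisive range is $\sigma\in[0,\tfrac{2\pi}{p}]$, where $v_p(\sigma)=-\cos\frac{p\sigma}{2}$ and the inequality is an \emph{equality} at $\sigma=\frac{\pi}{p}$ --- exactly the angle-sum produced by the extremal family of Section~2. Writing $x=\sigma/2\in[0,\tfrac{\pi}{p}]$ and $\phi(x)=\cos^{p}x/\cos^{p}\frac{\pi}{2p}-\tan\frac{\pi}{2p}\cos(px)-1$, one checks $\phi(\tfrac{\pi}{2p})=0$ and $\phi'(\tfrac{\pi}{2p})=0$, and that
$$\operatorname{sgn}\phi'(x)=\operatorname{sgn}\!\Big(\tan\tfrac{\pi}{2p}\,\psi(x)-\tfrac{1}{\cos^{p}\frac{\pi}{2p}}\Big),\qquad \psi(x):=\frac{\sin(px)}{\cos^{p-1}x\,\sin x}.$$
Hence everything reduces to the claim that $\psi$ is decreasing on $(0,\tfrac{\pi}{p})$, i.e.\ that $p\cot(px)+(p-1)\tan x\le\cot x$ there; granting it, $\phi$ increases on $(0,\tfrac{\pi}{2p})$ and decreases on $(\tfrac{\pi}{2p},\tfrac{\pi}{p})$, so $\phi\le\phi(\tfrac{\pi}{2p})=0$ and \eqref{lemma32} follows. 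Finally, in Section~3 the estimate \eqref{lemma32} is combined with the subharmonicity of $(z,w)\mapsto|zw|^{p/2}v_p(\arg z+\arg w)$ --- the function $v_p$ being designed so that $v_p''+\tfrac{p^{2}}{4}v_p\ge0$ distributionally, the ``$\max$'' contributing only upward corners --- and with the sub-mean-value property together with the vanishing of $P_+f\cdot\overline{P_-f}$ at the origin, to obtain part~2) of Theorem~1.1.

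The hard part will be the sharp scalar inequality $p\cot(px)+(p-1)\tan x\le\cot x$ on $(0,\tfrac{\pi}{p})$ (equivalently, monotonicity of $\psi$): its two sides agree to leading order both as $x\to0^{+}$ and past the critical point $x=\tfrac{\pi}{2p}$, so no crude majorization suffices and one must use $p\ge9$ carefully --- this, together with the verification that the asymmetry optimum really sits at $t=0$, is exactly where the hypothesis $p\ge9$ enters. By contrast the remaining ingredients (the case $b=0$, the range $\sigma\in[\tfrac{2\pi}{p},\pi]$, and the scalar estimate $\cos^{p}\frac{\pi}{p}/\cos^{p}\frac{\pi}{2p}+\tan\frac{\pi}{2p}\le1$) are comparatively routine one-variable calculus. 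The companion inequality \eqref{lemma31} is handled by the same scheme, with the roles of $\sin$ and $\cos$ (equivalently of $p$ and $p'$) interchanged and with the simpler, already smooth minorant $\Phi_1$.
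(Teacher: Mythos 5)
Your central reduction step is false, and it erases exactly the case the paper has to work hardest on. You claim that, for each fixed angle sum $\sigma$, the normalized left-hand side of \eqref{lemma32} is maximized at $|z|=|w|$, which would collapse the two-variable inequality to the single scalar inequality $-1+\cos^{p}(\sigma/2)/\cos^{p}\tfrac{\pi}{2p}+\tan\tfrac{\pi}{2p}v_p(\sigma)\le0$. This is not true. Take $p=9$, $\sigma=0$, $z=1$, $w=r$, so that the expression becomes
\[
\Phi(r,0)=\frac{(1+r)^{9}}{2^{9}\cos^{9}\tfrac{\pi}{18}}-\Bigl(\frac{1+r^{9/8}}{2}\Bigr)^{8}-r^{9/2}\tan\frac{\pi}{18}.
\]
One computes $\Phi(1,0)\approx-0.0286$ while $\Phi(0.5,0)\approx-0.0016$ and $\Phi(0,0)\approx-0.0017$: the value at $r=1$ is far more negative than at interior points, so the supremum over $r$ is \emph{not} at $r=1$. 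Thus the asymmetry-derivative sign analysis you invoke cannot go in the direction you need, and your proof as written only establishes the boundary case $|z|=|w|$.

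That boundary case is precisely the one the paper treats as a routine calculus exercise (the ``$r=1$'' boundary at the end of Section 4, which it does not even write out for $p>2$). The genuinely hard part, and the reason the hypothesis $p\ge9$ enters, is the opposite boundary $\sigma=0$ (the paper's ``$t=0$'') with $r=|w|/|z|$ ranging over $[0,1]$; the paper devotes all of Section 6 to it, showing $\Phi(r,0)\le0$ by splitting $[0,1]$ into $[0,2^{1/p}\cos\tfrac{\pi}{2p}-1]$, $[1-\tfrac2p,1]$, and a middle piece on which it proves monotonicity of a suitable auxiliary function. Your proposal, by misreading the paper's variable $t$ (which is the angle, not an asymmetry parameter), discards this case entirely. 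The scheme the paper actually uses never reduces the radial variable: it reduces the angular range from $[0,2\pi]$ to $[0,\pi/p]$, shows interior stationary points of $\Phi(r,t)$ in the rectangle $[0,1]\times[0,\pi/p]$ give a nonpositive value, and then checks all four sides of that rectangle ($r=0$, $r=1$, $t=\pi/p$, and the hard $t=0$) separately. To repair your argument you would need to restore the $r$-dependence and supply an analogue of Section 6.

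A minor secondary point: the monotonicity of $\psi(x)=\sin(px)/(\cos^{p-1}x\,\sin x)$ on $(0,\pi/p)$, which you flag as the crux, is in fact easy and does not need $p\ge9$: with $h(x)=p\sin x\cos((p-1)x)-\sin(px)$ one has $h(0)=0$ and $h'(x)=-p(p-2)\sin x\sin((p-1)x)\le0$ on $(0,\pi/p)$, so $h\le0$ and $\psi'\le0$ there. The constraint $p\ge9$ is consumed in the $t=0$ analysis of Section 6, not in the $r=1$ boundary you focus on.
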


\begin{proof}[Proof of the First Part of Theorem 1.1]
Applying Lemma 3.1. for the $z=P_+f(\zeta)$ and $w=\overline{P_- f(\zeta)}$, we obtain
\begin{equation*}\begin{split}
&-\left\{\frac{|P_+f(\zeta)|^s+|P_-f(\zeta)|^s}{2}\right\}^{\frac ps}+\frac{|P_+f(\zeta)+\overline{P_-f(\zeta)}|^p}
{2^p\sin^p\frac {\pi}{2p}}+\cot\frac{\pi}{2p} \Phi_1(P_+f(\zeta)P_-f(\zeta))^{ \frac{p}{2}} \leq 0,
\end{split}\end{equation*}
i.e.,
\begin{equation*}\begin{split}
\left(|P_+f(\zeta)|^s+|P_-f(\zeta)|^s\right)^{\frac ps}&\geq \frac{2^{\frac {p}{s}}}{2^p\sin^p\frac {\pi}{2p}}|f(\zeta)|^p
+2^{\frac{p}{s}}\cot\frac{\pi}{2p} \Phi_1(P_+f(\zeta)P_-f(\zeta))^{\frac p2}\leq 0.
\end{split}\end{equation*}
If we integrate the above inequality and use
\begin{equation*}
\int_{\mathbb{T}}  \Phi_1(P_+f(\zeta)P_-f(\zeta))^{\frac{p}{2}}|d\zeta| \geq 0,
\end{equation*}
(because of plurisubharmonicity of $\Phi_1$ proved in \cite{KALAJ.TAMS}), we obtain
\begin{equation*}\begin{split}
\int_{\mathbb{T}} (|P_+ f(\zeta)|^s+|P_-f(\zeta)|^s)^{\frac ps}|d\zeta| \geq \frac{ 2^{\frac ps} }{ 2^p\sin^p\frac {\pi}{2p} }
\int_\mathbb{T} |f(\zeta)|^p|d\zeta|,
\end{split}\end{equation*}
which is exactly what was to be proved.
\end{proof}

\begin{proof}[Proof of the Second Part of Theorem 1.1]
From the paper \cite{KALAJ.TAMS} we know that $\Phi_2(z,w)$ is plurisubharmonic on
$\mathbb{C}\times\mathbb{C}$. Hence, integrating the inequality from the Lemma 3.2 for $z=P_+f(\zeta)$ and $w=\overline{P_-f(\zeta)},$ we have
\begin{equation*}\begin{split}
\int_{\mathbb{T}}\left(|P_+f(\zeta)|^s+|P_-f(\zeta)|^s\right)^{\frac ps}|d\zeta|&\geq \frac{2^{\frac {p}{s}}}{2^p\cos^p\frac{\pi}{2p}}\int_{\mathbb{T}}|f(\zeta)|^p|d\zeta|
\\&+2^{\frac{p}{s}}\tan\frac{\pi}{2p}\int_{\mathbb{T}}\phi_p (P_+f(\zeta)P_-f(\zeta))|d\zeta|.
\end{split}\end{equation*}
and
$$\int_{\mathbb{T}}\Phi_2(P_+f(\zeta)P_-f(\zeta))|d\zeta| \geq 0,$$
thus finally getting:
\begin{equation*}\begin{split}
\int_{\mathbb{T}}\left(|P_+f(\zeta)|^s+|P_-f(\zeta)|^s\right)^{\frac ps}|d\zeta|&\geq \frac{2^{\frac {p}{s}}}{2^p\cos^p\frac {\pi}{2p}}\int_{\mathbb{T}}|f(\zeta)|^p|d\zeta|.
\end{split}\end{equation*}
\end{proof}

\section{Analysis of stationary points and reduction to boundary points}

As we can see from the previous section, our proof of Theorem 1.1 strongly depends on certain inequalities for complex numbers which serves as `'benchguards'' for $P_+f$ and $P_-f.$ Here, we will give the main part of their proofs postponing ''only'' case where one of variables is equal to 0. In what follows, we sketch our approach (similar to \cite{PETARMARIJAN}) and describe how our problem reduces to certain inequalities which we prove in the last two sections. \\

First, both inequalities \eqref{lemma31} and \eqref{lemma32} are homogeneous, hence it is enough to prove them for $z=1$ nad $w=re^{\imath t}.$ We can also suppose that $0\leq r\leq 1,$ since by symmetry, we can take that $z=1$ is the variable of non-smaller modulus.  
Let us now concentrate on \eqref{lemma31}. By power-mean inequality, it is enough to show it for $s=p.$\\

Denote
$$\Phi(r,t)=\frac{(1+r^2+2r\cos t)^{\frac{p}{2}}}{2^p\sin^p\frac{\pi}{2p}}-\bigg(\frac{1+r^s}{2}\bigg)^{\frac{p}{s}}+r^{\frac{p}{2}}\cot\frac{\pi}{2p}\cos\frac{(\pi-t)p}{2}.$$
We intend to prove:
\begin{equation}
\label{svod1}
\Phi(r,t)\leq 0,
\end{equation}
from which, by the above comments, the inequality \eqref{lemma31} follows. 
Finding the derivatives of $\Phi,$ we easily infer that $\frac{\partial \Phi}{\partial r}=\frac{\partial \Phi}{\partial t}=0$ is equivalent with the system:
$$ \frac{(1+r^2+2r\cos t)^{\frac{p}{2}-1}}{2^p\sin^p\frac{\pi}{2p}}=\frac{r^{\frac{p}{2}-1}\cot\frac{\pi}{2p}\sin\frac{(\pi-t)p}{2}}{2\sin t},$$
$$ \frac{(1+r^2+2r\cos t)^{\frac{p}{2}-1}(r+\cos t)}{2^p\sin^p\frac{\pi}{2p}}+\frac{r^{\frac{p}{2}-1}}{2}\cot\frac{\pi}{2p}\cos\frac{(\pi-t)p}{2}=2^{-\frac{p}{s}}r^{s-1}(1+r^s)^{\frac{p}{s}-1}.$$
We can estimate the derivative in $t$ in the following way:
\begin{align*}
\frac{2}{p}\frac{\partial \Phi}{\partial t}&=r^{\frac{p}{2}}\bigg[\frac{-2\sin t}{2^p\sin^p\frac{\pi}{2p}}\bigg(\frac{1+r^2}{r}+2\cos t\bigg)^{\frac{p}{2}-1}+\cot\frac{\pi}{2p}\sin\frac{(\pi-t)p}{2}\bigg]\\
&\geq r^{\frac{p}{2}}\bigg[\frac{-2\sin t}{2^p\sin^p\frac{\pi}{2p}}\big(2+2\cos t\big)^{\frac{p}{2}-1}+\cot\frac{\pi}{2p}\sin\frac{(\pi-t)p}{2}\bigg]\geq 0,
\end{align*}
for $t \in [0,\pi-\frac{\pi}{p}),$ thus making a reduction to the range $[\pi-\frac{\pi}{p},\pi].$
This inequality follows after substituting $\frac{t}{2}$ with $\frac{\pi-t}{2}$ and proving
$$f(t)=\frac{\sin pt}{\cos t\sin^{p-1}t}\geq   \frac{1}{\sin^p\frac{\pi}{2p}}\tan\frac{\pi}{2p},$$
for $[\frac{\pi}{2p},\frac{\pi}{2}].$\\
Since $f'(t)=\frac{g(t)}{\cos^2 t\sin^p t},$ with $g(t)= \sin pt-p\cos t\sin(p-1)t,$ for which we have $g'(t)=p(2-p)\cos t\cos(p-1)t\geq 0$ and $g(t)\geq g(\frac{\pi}{2p})=1-p\cos^2\frac{\pi}{2p}\geq 0$ (this follows from convexity of function $h(x)=\cos^2\frac{\pi x}{2}-x$ for $x\in[\frac{1}{2},1]$ and $h(\frac{1}{2})=0>h(1)=-1$). Therefore, $f$ is increasing and the estimate for $f$ follows.

From the second equation, we can express
$$(1+r^s)^{\frac{p}{s}-1}=\frac{2^{\frac{p}{s}}r^{\frac{p}{2}-s}\cot\frac{\pi}{2p}}{2\sin t}\bigg[(r+\cos t)\sin\frac{(\pi-t)p}{2}+\sin t\cos\frac{(\pi-t)p}{2}\bigg].$$
Using this with the first equation for the stationary point, we find that this is equivalent to:
\begin{align*}
\Phi(r,t)&=(1+r^2+2r\cos t)\frac{r^{\frac{p}{2}-1}\cot\frac{\pi}{2p}\sin\frac{(\pi-t)p}{2}}{2\sin t}\\
&-\frac{r^{\frac{p}{2}-s}(1+r^s)\cot\frac{\pi}{2p}}{2\sin t}\bigg[(r+\cos t)\sin\frac{(\pi-t)p}{2}+\sin t\cos\frac{(\pi-t)p}{2}\bigg]\\
&+r^{\frac{p}{2}}\cot\frac{\pi}{2p}\cos\frac{(\pi-t)p}{2}\leq 0.
\end{align*}
After cancellation of positive terms, we get:
\begin{align*}
\bigg(r+\frac{1}{r}+2\cos t\bigg)\sin\frac{(\pi-t)p}{2}&-(1+r^{-s})\bigg[(r+\cos t)\sin\frac{(\pi-t)p}{2}+\sin t\cos\frac{(\pi-t)p}{2}\bigg]\\
&+2\sin t \cos\frac{(\pi-t)p}{2}\leq 0.
\end{align*}
This is easily seen to be equivalent with
$$(1-r^{2-s})\sin\frac{(\pi-t)p}{2}+(r-r^{1-s})\sin\bigg(t+\frac{(\pi-t)p}{2}\bigg)\leq 0,$$
or
$$\frac{1-r^{2-s}}{r-r^{1-s}}\sin\frac{(\pi-t)p}{2}+\sin\bigg(t+\frac{(\pi-t)p}{2}\bigg)\geq 0.$$
Since $\frac{1-r^{2-s}}{r-r^{1-s}}$ decreases in $r$ for $s\leq 2,$
we have $\frac{1-r^{2-s}}{r-r^{1-s}}\geq 1-\frac{2}{s}$ and it is enough to prove
$$ \bigg(1-\frac{2}{s}\bigg)\sin\frac{(\pi-t)p}{2}+\sin\bigg(t+\frac{(\pi-t)p}{2}\bigg)\geq 0$$
for $\pi-\frac{\pi}{p}\leq t\leq \pi.$
Using concavity of sine function on the appropriate range, we have:
\begin{align*}
&\quad\bigg(1-\frac{2}{s}\bigg)\sin\frac{(\pi-t)p}{2}+\sin\bigg(t+\frac{(\pi-t)p}{2}\bigg)\\
&\geq \bigg(1-\frac{2}{s}\bigg)\sin\frac{(\pi-t)p}{2}+\bigg(\frac{2}{p}-1\bigg)\sin\frac{(\pi-t)p}{2}=2\bigg(\frac{1}{p}-\frac{1}{s}\bigg)\sin\frac{(\pi-t)p}{2}\geq 0,
\end{align*}
which is true for $s\geq p. $

Now, problem reduces to proving inequality on the boundary of rectangle $\Pi:=\{0\leq r\leq 1,\pi-\frac{\pi}{p} \leq t\leq \pi\}.$ It trivially holds by the construction of minorant for $t=\pi-\frac{\pi}{p}.$ 
For $r=1$ our inequality becomes (for $0\leq t\leq \pi$)
$$\cos^p\frac{\pi-t}{2}\leq \sin^p\frac{\pi}{2p}-\sin^{p-1}\frac{\pi}{2p}\cos\frac{\pi}{2p}\cos\frac{tp}{2}.$$
After substitution, we have
$$\varphi(t)=\frac{1-\cot\frac{\pi}{2p}\cos pt}{\sin^p t}\geq \frac{1}{\sin^p\frac{\pi}{2p}}, \quad \text{for} \quad 0\leq t\leq \frac{\pi}{2}.$$
From 
$$\frac{\varphi'(t)}{\varphi(t)}=\frac{ph(t)}{\sin t(1-\cot\frac{\pi}{2p}\cos pt)},$$
where $h(t)=\cot\frac{\pi}{2p}\cos((p-1)t)-\cos t.$
Since $h'(t)=\sin t-(p-1)\cot\frac{\pi}{2p}\sin((p-1)t)\geq (1-(p-1)\cot\frac{\pi}{2p})\sin((p-1)t)\geq 0,$ we infer that $h$ increases and has its only zero in $(,\frac{\pi}{2})$ in $t=\frac{\pi}{2p}.$ Therefore, $\varphi$ has its minimum in $\frac{\pi}{2p}$ and the estimate follows.

The case $r=0$ gives $2^p\sin^p\frac{\pi}{2p}\geq 2.$ This is equivalent with $\sin\frac{\pi x}{2}\geq 2^{x-1}$ with $x=\frac{1}{p}\in [\frac{1}{2},1].$ But, $\sin\frac{\pi x}{2}-2^{x-1}$ is easily seen to be concave function with values in $\frac{1}{2}$ and $1$ equal to $0$ and the estimate follows. The case $t=0$ will be discussed in the next section. \\
 
In case $p\geq 2,$ our main objective is proving (with $s=\frac{p}{p-1}$)
\begin{equation}
\label{svod2}
\Phi(r,t):=\frac{(1+r^2+2r\cos t)^{\frac{p}{2}}}{2^p\cos^p\frac{\pi}{2p}}-\bigg(\frac{1+r^s}{2}\bigg)^{\frac{p}{s}}+r^{\frac{p}{2}}\tan\frac{\pi}{2p}v_p(t)\leq 0.
\end{equation}

Since $v_p(t)\leq 1,$ for $\frac{2\pi}{p}\leq t\leq 2\pi-\frac{2\pi}{p},$ we easily get:
\begin{align*}
&\quad\frac{(1+r^2+2r\cos t)^{\frac{p}{2}}}{2^p\cos^p\frac{\pi}{2}}-\bigg(\frac{1+r^s}{2}\bigg)^{\frac{p}{s}}+r^{\frac{p}{2}}\tan\frac{\pi}{2p}v_p(t)\\
&\leq\frac{(1+r^2+2r\cos \frac{2\pi}{p})^{\frac{p}{2}}}{2^p\cos^p\frac{\pi}{2p}}-\bigg(\frac{1+r^s}{2}\bigg)^{\frac{p}{s}}+r^{\frac{p}{2}}\tan\frac{\pi}{2p}=\Phi\bigg(r,\frac{2\pi}{p}\bigg),
\end{align*}
hence it is enough to prove the inequality for $0\leq t\leq \frac{2\pi}{p}.$ The same holds for $2\pi-\frac{2\pi}{p}\leq t\leq 2\pi,$ since $\Phi(r,t)=\Phi(r,2\pi-t).$
Further, we can see that
\begin{align*}
\frac{2}{p}\frac{\partial \Phi}{\partial t}&=r^{\frac{p}{2}}\bigg[\frac{-2r\sin t}{2^p\cos^p\frac{\pi}{2p}}\bigg(\frac{1+r^2}{r}+2\cos t\bigg)^{\frac{p}{2}-1}+\tan\frac{\pi}{2p}\sin\frac{tp}{2}\bigg]\\
&\leq r^{\frac{p}{2}}\bigg[\frac{-2r\sin t}{2^p\cos^p\frac{\pi}{2p}}\big(2+2\cos t\big)^{\frac{p}{2}-1}+\tan\frac{\pi}{2p}\sin\frac{tp}{2}\bigg]=:r^{\frac{p}{2}}g(t)\\
&\leq r^{\frac{p}{2}}g\bigg(\frac{\pi}{p}\bigg)=0,
\end{align*}
for $\frac{\pi}{p}\leq t\leq \frac{2\pi}{p}.$
Last inequality follows from the observation that 
$$f(t)=\frac{\sin\frac{tp}{2}}{\sin t(1+\cos t)^{\frac{p}{2}-1}}=2^{-\frac{p}{2}}\frac{\sin\frac{tp}{2}}{\sin\frac{t}{2}\cos^{p-1}\frac{t}{2}}=2^{-\frac{p}{2}}F\bigg(\frac{t}{2}\bigg)$$
for $F(t)=\frac{\sin p t}{\sin t\cos^{p-1}t}$
is decreasing on $[\frac{\pi}{2p},\frac{\pi}{p}].$ Indeed, since
$$F'(t)=\frac{1}{\sin^2t\cos^{2p-2}t}\bigg(p\sin t\cos(p-1)t-\sin pt\bigg)=:\frac{h(t)}{\sin^2t\cos^{2p-2}t},$$
$$h'(t)=-p(p-2)\sin t\sin(p-1)t\leq 0,$$
we have $h(t)\leq h(\frac{2\pi}{p})=p\sin^2\frac{\pi}{2p}-1\leq 0$ (an easy exercise) and finally $F'(t)\leq 0.$
Now, the inequality reduces to $\Phi(r,t)\leq 0,$ for $0\leq r\leq 1$ and $0\leq t\leq \frac{\pi}{p}.$
First, we analyse potential stationary points as in the case $1<p<2.$ We have:
$$\frac{\partial \Phi}{\partial t}=0 \iff \frac{(1+r^2+2r\cos t)^{\frac{p}{2}-1}}{2^p\cos^p\frac{\pi}{2p}}=\frac{\tan\frac{\pi}{2p}\sin\frac{tp}{2}r^{\frac{p}{2}-1}}{2\sin t}.$$
and 
$$\frac{2}{p}\frac{\partial \Phi}{\partial r}=0 \iff (1+r^s)^{\frac{p}{s}-1}=\frac{2^{\frac{p}{s}-1}r^{\frac{p}{2}-s}\tan\frac{\pi}{2p}}{\sin t}\bigg(-\sin t\cos\frac{tp}{2}+(r+\cos t)\sin\frac{tp}{2}\bigg).$$
In such a point, the function $\Phi(r,t)$ is equal to 
\begin{align*}
\Phi(r,t)&=\frac{r^{\frac{p}{2}-1}(1+r^2+2r\cos t)\tan\frac{\pi}{2p}\sin\frac{tp}{2}}{2\sin t}\\
&-\frac{(1+r^s)r^{\frac{p}{2}-s}\tan\frac{\pi}{2p}}{\sin t}\bigg(-\sin t\cos\frac{tp}{2}+(r+\cos t)\sin\frac{tp}{2}\bigg)-r^{\frac{p}{2}}\tan\frac{\pi}{2p}\cos\frac{tp}{2}.
\end{align*}
Non-positivity of this expression for $0<t<\frac{\pi}{p}$ is equivalent with
$$\frac{1-r^{2-s}}{r-r^{1-s}}\sin\frac{tp}{2}+\sin\bigg(\frac{p}{2}-1\bigg)t\geq 0.$$
We know that $\frac{1-r^{2-s}}{r-r^{1-s}}\geq 1-\frac{2}{s}$, hence:
\begin{align*}
&\quad\frac{1-r^{2-s}}{r-r^{1-s}}\sin\frac{tp}{2}+\sin\bigg(\frac{p}{2}-1\bigg)t\\
&\geq \bigg(1-\frac{2}{s}\bigg)\sin\frac{tp}{2}+\sin\bigg(\frac{p}{2}-1\bigg)t\\
&\geq \bigg(1-\frac{2}{s}\bigg)\sin\frac{tp}{2}+\bigg(1-\frac{2}{p}\bigg)\sin\frac{tp}{2}=2\bigg(1-\frac{1}{p}-\frac{1}{s}\bigg)\sin\frac{tp}{2}\geq 0
\end{align*}
for $s\geq\frac{p}{p-1}$ and $0<t<\frac{\pi}{p}.$

Inequality \eqref{lemma32} for $t=\frac{\pi}{p}$ follows from the discussion given in the second section. Case $t=0$ will be proved in the sixth section. If $r=0,$ inequality follows from $2\cos^p\frac{\pi}{2p}\geq 2\cos^2\frac{\pi}{2p}=1+\cos\frac{\pi}{p}\geq 1.$ Case $r=1$ reduces to the inequality:
$$\frac{\cos^p t}{\cos^p\frac{\pi}{2p}}-1-\tan\frac{\pi}{2p}\cos pt\leq 0$$
for $0\leq t\leq \frac{\pi}{p}.$
We will omit the proof of this inequality since it is very similar to the appropriate estimate in case $1<p<2.$\\

In next two sections we will complete our proofs of \eqref{lemma31} and \eqref{lemma32} by proving the remaining cases of \eqref{svod1} and \eqref{svod2},i.e. those when $t=0.$

\section{A proof of inequality $(3)$ for $t=\pi$}

In this section, our aim is to prove 
$$\frac{(1-r)^p}{2^p\sin^p\frac{\pi}{2p}}-\frac{1+r^p}{2}+r^{\frac{p}{2}}\cot\frac{\pi}{2p}\leq 0,$$
for $1<p<2.$

Using Jensen's inequality 
$$(c+1)\bigg[\frac{a^{\frac{p}{2}}}{c+1}+\frac{c}{c+1}b^{\frac{p}{2}}\bigg]\leq (c+1)\bigg(\frac{a}{c+1}+\frac{bc}{c+1}\bigg)^{\frac{p}{2}}, \quad \frac{p}{2}\leq 1,$$
with $a=\frac{(1-r)^2}{4\sin^2\frac{\pi}{2p}},$ $b=r$ and $c=\cot\frac{\pi}{2p},$
we get:
$$\frac{(1-r)^p}{2^p\sin^p\frac{\pi}{2p}}+r^{\frac{p}{2}}\cot\frac{\pi}{2p}\leq \big(1+\cot\frac{\pi}{2p}\big)^{1-\frac{p}{2}}\bigg(\frac{(1-r)^2}{4\sin^2\frac{\pi}{2p}}+r\cot\frac{\pi}{2p}\bigg)^{\frac{p}{2}}.$$
We have to prove 
$$L(r)=\big(1+\cot\frac{\pi}{2p}\big)^{1-\frac{p}{2}}\frac{\big((1-r)^2+2r\sin\frac{\pi}{p}\big)^{\frac{p}{2}}}{2^p\sin^p\frac{\pi}{2p}}\leq \frac{1+r^p}{2},$$
or
$$F(r)=\frac{\big((1-r)^2+2r\sin\frac{\pi}{p}\big)^{\frac{p}{2}}}{1+r^p}\leq 2^{p-1}\sin^p\frac{\pi}{2p}\bigg(1+\cot\frac{\pi}{2p}\bigg)^{-1+\frac{p}{2}}.$$
From
$$F'(r)=\frac{p\big((1-r)^2+2r\sin\frac{\pi}{p}\big)^{\frac{p}{2}-1}}{(1+r^p)^2}\bigg[r-r^{p-1}+(r^p-1)(1-\sin\frac{\pi}{p})\bigg],$$
we see that $F$ decreases and $F(r)\leq F(0)=1,$
therefore the problem reduces to proving 
\begin{equation}
\label{prvisl}
\big(1+\cot\frac{\pi}{2p}\big)^{1-\frac{p}{2}}\leq 2^{p-1}\sin^p\frac{\pi}{2p}.
\end{equation}
We will prove this inequality later, for $\frac{4}{3}\leq p\leq 2.$\\

For the rest of the interval, we use convexity of the power function. Namely, from $(1-r)^p\leq 1-r^p,$ we have:
\begin{align*}
&\quad\frac{(1-r)^p}{2^p\sin^p\frac{\pi}{2p}}-\frac{1+r^p}{2}+r^{\frac{p}{2}}\cot\frac{\pi}{2p}\\
&\leq \frac{1-r^p}{2^p\sin^p\frac{\pi}{2p}}-\frac{1+r^p}{2}+r^{\frac{p}{2}}\cot\frac{\pi}{2p}\\
&=r^p\bigg(-\frac{1}{2}-\frac{1}{2^p\sin^p\frac{\pi}{2p}}\bigg)+r^{\frac{p}{2}}\cot\frac{\pi}{2p}+\frac{1}{2^p\sin^p\frac{\pi}{2p}}-\frac{1}{2}.
\end{align*}
The last expression is a quadratic function on $r^{\frac{p}{2}}$ and it attains the maximum in the point $0\leq\frac{\cot\frac{\pi}{2p}}{1+\frac{2}{2^p\sin^p\frac{\pi}{2p}}}<1.$
Therefore, we need to prove:
\begin{equation}
\label{drugisl}
1-\frac{4}{4^p\sin^{2p}\frac{\pi}{2p}}\geq \cot^2\frac{\pi}{2p}.
\end{equation}
This inequality will be proved for $1<p\leq\frac{4}{3}.$\\

Let us prove that $\big(1+\cot\frac{\pi}{2p}\big)^{1-\frac{p}{2}}\bigg(\frac{1}{\sin^2\frac{\pi}{2p}}\bigg)^{\frac{p}{2}}\leq 2^{p-1},$ for $\frac{4}{3}\leq p\leq 2$ (this is rephrased \eqref{prvisl}). From weighted AM-GM inequality, we have: 
\begin{align*}
\big(1+\cot\frac{\pi}{2p}\big)^{1-\frac{p}{2}}\bigg(\frac{1}{\sin^2\frac{\pi}{2p}}\bigg)^{\frac{p}{2}}&\leq \big(1-\frac{p}{2}\big)\big(1+\cot\frac{\pi}{2p}\big)+\frac{p}{2}\frac{1}{\sin^2\frac{\pi}{2p}}\\
&=\frac{p}{2}\cot^2\frac{\pi}{2p}+\big(1-\frac{p}{2}\big)\cot\frac{\pi}{2p}+1.
\end{align*}
Let us prove that $F(p)=\frac{p}{2}\cot^2\frac{\pi}{2p}+(1-\frac{p}{2})\cot\frac{\pi}{2p}\leq 2^{p-1}-1.$
We easily find that 
$$F''(p)=\frac{\pi p}{8p^4\sin^4\frac{\pi}{2p}}\bigg[4(\pi-1)+\pi\bigg(\frac{2}{p}-1\bigg)\sin\frac{\pi}{p}+(2\pi+4)\cos\frac{\pi}{p}\bigg].$$
Denote $g(x)=4\pi-4+\pi(2x-1)\sin\pi x+(2\pi+4)\cos\pi x$ with $x=\frac{1}{p}\in [\frac{1}{2},\frac{3}{4}].$ Since $g'(x)=\pi\sin\pi x\big(-2(1+\pi)+\pi(2x-1)\cot\pi x\big)<0,$  $g$ is decreasing and, consequently, $g(\frac{1}{p})$ increasing in $p.$ Therefore, $g(x)\geq g(\frac{3}{4})=(4-\frac{3}{2\sqrt{2}})\pi-4-2\sqrt{2}>0$ and $F$ is convex. For $\frac{4}{3}\leq p\leq \frac{3}{2},$ this implies:
$$F(p)=F\bigg((1-\alpha)\frac{4}{3}+\alpha\frac{3}{2}\bigg)\leq (1-\alpha)F\bigg(\frac{4}{3}\bigg)+\alpha F\bigg(\frac{3}{2}\bigg)=(1-\alpha)\big(\frac{5}{3}-\sqrt{2}\big)+\frac{\alpha}{4}\big(1+\frac{1}{\sqrt{3}}\big),$$
and, since $\alpha=6p-8,$ we get:
$$F(p)\leq (9-6p)\bigg(\frac{5}{3}-\sqrt{2}\bigg)+\frac{3p-4}{2}\bigg(1+\frac{1}{\sqrt{3}}\bigg).$$
Note that, for 
$$G_1(p)=2^{p-1}-1-(9-6p)\bigg(\frac{5}{3}-\sqrt{2}\bigg)-\frac{3p-4}{2}\bigg(1+\frac{1}{\sqrt{3}}\bigg),$$
we have
\begin{align*}
G_1'(p)&=\frac{1}{2}\big(2^p\log 2+17-12\sqrt{2}-\sqrt{3}\big)\\
&\geq  \frac{1}{2}\big(2^{\frac{4}{3}}\log 2+17-12\sqrt{2}-\sqrt{3}\big)> 0,
\end{align*}
hence $G_1$ increases and $G_1(p)\geq G_1(\frac{4}{3})=\sqrt[3]{2}+\sqrt{2}-\frac{8}{3}>0.$
This concludes the proof of $F(p)\leq 2^{p-1}-1$ in this case. \\
We work similarly for $\frac{3}{2}\leq p\leq \frac{8}{5},$ but we represent $p$ as a convex combination of $\frac{3}{2}$ and $2:$
$$F(p)=F\bigg((1-\alpha)\frac{3}{2}+2\alpha\bigg)\leq (1-\alpha)F\bigg(\frac{3}{2}\bigg)+\alpha F(2)=\frac{1-\alpha}{4}\bigg(1+\frac{1}{\sqrt{3}}\bigg)+\alpha,$$
thus getting:
$$F(p)\leq (4-2p)\bigg(\frac{1}{4}+\frac{1}{4\sqrt{3}}\bigg) +2p-3.$$
We easily see that the function $G_2(p)=2^{p-1}-1-(4-2p)\big(\frac{1}{4}+\frac{1}{4\sqrt{3}}\big) -2p+3$ is monotone decreasing, since
$$G_2'(p)=2^{p-1}\log 2-\frac{3}{2}+\frac{1}{2\sqrt{3}}< 2^{\frac{3}{5}}\log 2-\frac{3}{2}+\frac{1}{2\sqrt{3}}<0,$$
which implies $G_2(p)\geq G_2(\frac{8}{5})=2^{\frac{3}{5}}\log 2-\frac{1}{5}\big(7+\frac{1}{\sqrt{3}}\big)>0,$
again, getting $F(p)\leq 2^{p-1}-1.$\\
Since $(F(p)-2^{p-1}+1)''=-2^{p-1}\log^2 2+\frac{\pi p}{8p^4\sin^4\frac{\pi}{2p}}\big[4(\pi-1)+\pi(\frac{2}{p}-1)\sin\frac{\pi}{p}+(2\pi+4)\cos\frac{\pi}{p}\big]$
and $p\sin\frac{\pi}{2p}$ increases in $p,$
we have
\begin{align*}
\big(F(p)-2^{p-1}+\big)''&\geq -2^{p-1}\log^2 2+\frac{\pi p}{32}\bigg[4(\pi-1)+\pi\bigg(\frac{2}{p}-1\bigg)\sin\frac{\pi}{p}+(2\pi+4)\cos\frac{\pi}{p}\bigg]\\
&=\frac{p\pi(\pi-1)}{8}-2^{p-1}\log^2 2+\frac{p\pi}{32}\bigg[\pi\bigg(\frac{2}{p}-1\bigg)\sin\frac{\pi}{p}+(2\pi+4)\cos\frac{\pi}{p}\bigg].
\end{align*}
The last summand is increasing in $p,$ by the observation from the first part of the proof and is not smaller than its value in $\frac{8}{5},$ while 
$$\bigg(\frac{p\pi(\pi-1)}{8}-2^{p-1}\log^2 2\bigg)'=\frac{\pi(\pi-1)}{8}-2^{p-1}\log^3 2\geq \frac{\pi(\pi-1)}{8}-2\log^3 2 >0,$$
hence 
$$\big(F(p)-2^{p-1}+1\big)''\geq \frac{\pi(\pi-1)}{5}-2^{\frac{3}{5}}\log^2 2+\frac{\pi}{20}\bigg(\frac{\pi}{4}\sin\frac{5\pi}{8}+(2\pi+4)\cos\frac{5\pi}{8}\bigg)>0.$$
Therefore $F(p)-2^{p-1}+1$ is concave and attains its maximum on $[\frac{8}{5},2]$ in some of the endpoints. We have already proved that $F(\frac{8}{5})-1+2^{\frac{3}{5}}<0=F(2)-1,$ thus, $F(p)\leq 2^{p-1}-1$ on the whole interval $[\frac{4}{3},2].$\\

By using the change of variable $x=\cot\frac{\pi}{2p}$ the inequality \eqref{drugisl}
transforms into
$$\frac{1-x^2}{4}\geq \bigg(\frac{1+x^2}{4}\bigg)^{\frac{\pi}{2\arccot x}}$$
or
$$\log\frac{1-x^2}{4}\geq \frac{\pi}{2\arccot x}\log\frac{1+x^2}{4},$$
for $0\leq x\leq \cot\frac{3\pi}{8}.$
Since
$$\frac{d^2}{d x^2}\bigg(\frac{1}{\arccot x}\bigg)=-2\frac{x(\arctan{\frac{1}{x}}-\frac{1}{x})}{(x^2+1)^2 \arctan^3\frac{1}{x}},$$
we infer that $\frac{\pi}{2\arccot x}$ is convex and:
\begin{align*}
f(x)&=\log\frac{1-x^2}{4}-\frac{\pi}{2\arccot x}\log\frac{1+x^2}{4}\\
&\geq \log\frac{1-x^2}{4}-\bigg(1+\frac{2}{\pi}x\bigg)\log\frac{1+x^2}{4}=:g_1(x),
\end{align*}
where $1+\frac{2}{\pi}x$ is the tangent ofthe graph of $\frac{\pi}{2\arccot x}$ at the point $(0,1).$ 
From
$$g_1''(x)=-\frac{4(3\pi x^4+(x^2-1)^2(x^2+3)x+\pi)}{\pi(1-x^4)^2}<0,$$
we see that $g_1$ is concave and:
$$f(x)\geq g_1(x)\geq \min\bigg\{g_1(0),g_1\bigg(\frac{1}{3}\bigg)\bigg\}=\min\bigg\{0,\bigg(1+\frac{2}{3\pi}\bigg)\log\frac{18}{5}-\log\frac{9}{2}\bigg\}=0,$$
since $(1+\frac{2}{3\pi})\log\frac{18}{5}-\log\frac{9}{2}>\frac{6}{5}\log\frac{18}{5}-\log\frac{9}{2}=\frac{1}{5}\log\frac{18432}{15625}>0.$ Therefore, $f(x)\geq 0,$ for $x \in [0,\frac{1}{3}].$\\
For $[\frac{1}{3},\cot\frac{3\pi}{8}]$ we use
\begin{align*}
\frac{\pi}{2\arccot x}&\geq \frac{\pi}{2\arccot \frac{1}{3}}+\frac{\pi\big(x-\frac{1}{3}\big)}{2(1+\frac{1}{9})\arccot \frac{1}{3}}\\
&\geq \frac{5}{4}+\frac{45}{56}\bigg(x-\frac{1}{3}\bigg).
\end{align*}
We used here that $\frac{\pi}{2\arccot \frac{1}{3}}\geq \frac{5}{4},$ which is equivalent to $\frac{1}{3}\geq \cot\frac{2\pi}{5}=\sqrt{1-\frac{2}{\sqrt{5}}}.$
From this, we get:
\begin{align*}
f(x)&\geq g_2(x)=\log\frac{1-x^2}{4}-\frac{45x+55}{56}\log\frac{1+x^2}{4}\\
&\geq\min\bigg\{g_2\bigg(\frac{1}{3}\bigg),g_2\bigg(\cot\frac{3\pi}{8}\bigg)\bigg\}\\
&=\min\bigg\{\frac{5}{4}\log\frac{18}{5}-\log\frac{9}{2},-\frac{10+45\sqrt{2}}{56}\log\frac{2-\sqrt{2}}{2}+\log\frac{\sqrt{2}-1}{2}\bigg\}>0,
\end{align*}
since
$$g_2''(x)=-\frac{1}{28(1-x^4)^2}\bigg[45x^7+x^6+45x^5+333x^4-225x^3+3x^2+135x+111\bigg]<0$$
for $0\leq x\leq 1.$ Finally, $f(x)\geq 0$ for $x \in [\frac{1}{3},\cot\frac{3\pi}{8}].$

\section{A proof of inequality $(4)$ for $t=0$}

In this section, we will prove 
$$\Phi(r)=\frac{(1+r)^p}{2^p\cos^p\frac{\pi}{2p}}-\bigg(\frac{1+r^{\frac{p}{p-1}}}{2}\bigg)^{p-1}-r^{\frac{p}{2}}\tan\frac{\pi}{2p}\leq 0$$
for $p\geq 4.$

Changing the variable as $r=e^{-2y}$ and multiplying both sides by $e^{py},$ we get the following equivalent form:
$$F(y)=\cosh^{p-1}\frac{py}{p-1}+\tan\frac{\pi}{2p}-\frac{\cosh^py}{\cos^p\frac{\pi}{2p}}\geq 0.$$
It can be easily calculated that $F'(y)=p G(y)\cosh^{p-1}y\sinh y,$ where
$$G(y)=\frac{\cosh^{p-2}\frac{py}{p-1}\sinh\frac{py}{p-1}}{\cosh^{p-1}y\sinh y}-\frac{1}{\cos^p\frac{\pi}{2p}}.$$
This function increases in $y,$ since
$$\frac{4(p-1)\cosh^py\sinh^2y}{\cosh^{p-3}\frac{py}{p-1}}G'(y)=(p-1)(p-2)\sinh\frac{2py}{p-1}-p(p-3)\sinh(2y)-p(p-1)\sinh\frac{2y}{p-1}\geq 0.$$
The last expression can be represented via infinite sum as
$$\sum_{k=1}^{+\infty}\frac{(2y)^{2k-1}}{(2k-1)!(p-1)^{2k-1}}\bigg((p-1)(p-2)p^{2k-1}-p(p-3)(p-1)^{2k-1}-p(p-1)\bigg),$$
where every coefficient is positive, since 
\begin{align*}
&\quad(p-1)(p-2)p^{2k-1}-p(p-3)(p-1)^{2k-1}-p(p-1)\\
&=(p^2-3p+2)\big(p^{2k-1}-(p-1)^{2k-1}\big)+2(p-1)^{2k-1}-p(p-1)\\
&\geq p^2-3p+2+2(p-1)-p(p-1)=0.
\end{align*}

Now, we prove that $G\big(\frac{\pi}{4\sqrt{p}}\big)>0.$ Namely, there holds the following: 
\begin{lemma}
For $p\geq 4$ there holds the following estimate
$$\frac{\cosh^{p-2}\frac{\pi\sqrt{p}}{4(p-1)}\sinh\frac{\pi\sqrt{p}}{4(p-1)}}{\cosh^{p-1}\frac{\pi}{4\sqrt{p}}\sinh\frac{\pi}{4\sqrt{p}}}\geq \frac{1}{\cos^p\frac{\pi}{2p}}.$$
\end{lemma}
\begin{proof}
Note that $g(x)=\frac{\sinh x}{x+\frac{x^3}{6}}$ increases in $x,$
since $g'(x)=\frac{x}{(x+\frac{x^3}{6})^2}\big[(1+\frac{x^2}{6})\cosh x-\frac{\sinh x}{x}(1+\frac{x^2}{2}\big]$ is positive as it is the expression inside the angular brackets. Indeed, 
$$\bigg(1+\frac{x^2}{6}\bigg)\cosh x=1+\sum_{k=1}^{+\infty}\bigg(\frac{1}{(2k)!}+\frac{1}{6\cdot(2k-2)!}\bigg)x^{2k}$$
and $$\bigg(1+\frac{x^2}{2}\bigg)\frac{\sinh x}{x}=1+\sum_{k=1}^{+\infty}\bigg(\frac{1}{(2k+1)!}+\frac{1}{2\cdot(2k-1)!}\bigg)x^{2k}$$
and $\frac{1}{(2k)!}+\frac{1}{6\cdot(2k-2)!}\geq \frac{1}{(2k+1)!}+\frac{1}{2\cdot(2k-1)!}.$ (After multiplying by (2k+1)! this reduces to $2k^2-3k+1\geq 0.)$
Using this, we get
$$\frac{\sinh\frac{\pi\sqrt{p}}{4(p-1)}}{\sinh\frac{\pi}{4\sqrt{p}}}\geq \frac{\frac{\pi\sqrt{p}}{4(p-1)}\big(1+\frac{\pi^2 p}{96(p-1)^2}\big)}{\frac{\pi}{4\sqrt{p}}\big(1+\frac{\pi^2}{96p}\big)}=\frac{p}{p-1}\frac{1+\frac{\pi^2p}{96(p-1)^2}}{1+\frac{\pi^2}{96p}}.$$
Hence, we are reduced to prove 
$$\frac{\cosh^{p-2}\frac{\pi\sqrt{p}}{4(p-1)}}{\cosh^{p-1}\frac{\pi}{4\sqrt{p}}}\geq \frac{p-1}{p}
\frac{1+\frac{\pi^2}{96p}}{1+\frac{\pi^2p}{96(p-1)^2}}\frac{1}{\cos^p\frac{\pi}{2p}}.$$
We have $(\frac{\cosh x}{1+\frac{x^2}{2}})'=\frac{x}{\cosh^2x}\big(-\cosh x+(1+\frac{x^2}{2})\frac{\sinh x}{x}\big)\geq 0,$
since $(1+\frac{x^2}{2})\frac{\sinh x}{x}=1+\sum_{k=1}^{+\infty}\big(\frac{1}{(2k+1)!}+\frac{1}{2\cdot(2k-1)!}\big)x^{2k}\geq 1+\sum_{k=1}^{+\infty}\frac{x^{2k}}{(2k)!}=\cosh x,$
therefore:
$$\frac{\cosh^{p-2}\frac{\pi\sqrt{p}}{4(p-1)}}{\cosh^{p-1}\frac{\pi}{4\sqrt{p}}}\geq\frac{1}{\cosh\frac{\pi}{4\sqrt{p}}}\bigg(\frac{1+\frac{\pi^2p}{32(p-1)^2}}{1+\frac{\pi^2}{32p}}\bigg)^{p-2}.$$
Now, it remains to show:
$$\frac{1}{\cosh\frac{\pi}{4\sqrt{p}}}\bigg(\frac{1+\frac{\pi^2p}{32(p-1)^2}}{1+\frac{\pi^2}{32p}}\bigg)^{p-2}\cdot\frac{p}{p-1}\cdot\frac{1+\frac{\pi^2p}{96(p-1)^2}}{1+\frac{\pi^2}{96p}}\cos^p\frac{\pi}{2p}\geq 1$$
or
\begin{align*}
L(p)&:=\log\frac{p}{p-1}+p\log(\cos\frac{\pi}{2p})+(p-2)\bigg[\log\bigg(1+\frac{\pi^2p}{32(p-1)^2}\bigg)-\log\bigg(1+\frac{\pi^2}{32p}\bigg)\bigg]\\
&+\log\bigg(1+\frac{\pi^2p}{96(p-1)^2}\bigg)-\log\bigg(1+\frac{\pi^2}{96p}\bigg)-\log\cosh\frac{\pi}{4\sqrt{p}}\geq 0.
\end{align*}
We use the following estimates:
\begin{align*}&\log\frac{p}{p-1}=\log\frac{1}{1-\frac{1}{p}}\geq \frac{1}{p}+\frac{1}{2p^2}, \\
&p\log(\cos\frac{\pi}{2p})\geq p\log\bigg(1-\frac{\pi^2}{8p^2}\bigg)=-p\sum_{k=1}^{+\infty}\frac{\pi^{2k}}{kb^kp^{2k}}\\
&\geq -p\cdot\frac{\pi^2}{8p^2}-\frac{p}{2}\cdot\frac{\frac{\pi^4}{64p^4}}{1-\frac{\pi^2}{8p^2}}=-\frac{\pi^2}{8p}-\frac{\pi^4}{16p(8p^2-\pi^2)},\\
&\cosh\frac{\pi}{4\sqrt{p}}\leq e^{\frac{\pi^2}{32p}},\\
&\log\bigg(1+\frac{\pi^2p}{32(p-1)^2}\bigg)-\log\bigg(1+\frac{\pi^2}{32p}\bigg)=\bigg(\frac{\pi^2p}{32(p-1)^2}-\frac{\pi^2}{32p}\bigg)\cdot\frac{1}{1+\xi}\\
&\geq \frac{\pi^2(2p-1)}{32p(p-1)^2}\cdot\frac{1}{1+\frac{\pi^2p}{32(p-1)^2}}=\frac{\pi^2(2p-1)}{p(32(p-1)^2+\pi^2p)},\\
&\log\bigg(1+\frac{\pi^2p}{96(p-1)^2}\bigg)-\log\bigg(1+\frac{\pi^2}{96p}\bigg)\geq \frac{\pi^2(2p-1)}{p(96(p-1)^2+\pi^2p)}.
\end{align*}
Note that last two inequalities are implied by Lagrange's mean value theorem and \\
monotonicity of the derivative of the logarithm.\\
We have:
$$p L(p)\geq 1 +\frac{1}{2p}-\frac{\pi^2}{8}-\frac{\pi^4}{16(8p^2-\pi^2)}+\frac{\pi^2(p-2)(2p-1)}{32(p-1)^2+\pi^2p}+\frac{\pi^2(2p-1)}{96(p-1)^2+\pi^2p}-\frac{\pi^2}{32}.$$
Since
\begin{align*}
-&\frac{\pi^4}{16(8p^2-\pi^2)}\geq-\frac{\pi^4}{112p^2},\\
&\frac{\pi^2(2p-1)}{96(p-1)^2+\pi^2p}\geq \frac{\pi^2(2p-2)}{102(p-1)^2}=\frac{\pi^2}{51(p-1)}
\end{align*}
and
$$\frac{\pi^2(2p-1)(p-2)}{32(p-1)^2+\pi^2p}=\frac{\pi^2}{16}-\frac{\pi^2 p(1+\frac{\pi^2}{16})}{32(p-1)^2+\pi^2p}$$
the estimate $L(p)\geq 0$ follows from
$$1-\frac{3\pi^2}{32}-\frac{\pi^4}{112p^2}+\frac{1}{2p}+\frac{\pi^2}{51(p-1)}-\frac{\pi^2p\big(1+\frac{\pi^2}{16}\big)}{32(p-1)^2+\pi^2p}\geq 0.$$
Introducing $x=\frac{1}{p},$ we have
$$H(x)=1-\frac{3\pi^2}{32}-\frac{\pi^4x^2}{112}+\frac{x}{2}+\frac{\pi^2x}{51(1-x)}-\frac{\pi^2\big(1+\frac{\pi^2}{16}\big)x}{32(1-x)^2+\pi^2x}\geq 0.$$
But, this function is concave, as
$$H''(x)=-\frac{\pi^4}{56}+\frac{2\pi^2}{51(1-x)^3}-\frac{4\pi^2(16+\pi^2)(32x^3-96x-\pi^2+64)}{(32(1-x)^2+\pi^2x)^3}\leq 0.$$
(The last summand is evidently negative, while the sum of the first two has its maximum in $x=\frac{1}{4},$ which is negative.) This means that $H(x)\geq\min\{H(0),H(\frac{1}{4})\}>0.$\\

Therefore, it is enough to prove $F(y)\geq 0$ for $0\leq y\leq \frac{\pi}{4\sqrt{p}}.$ In fact, we will prove slightly stronger estimate:
$$f(y):=\bigg(1+\frac{p^2y^2}{2(p-1)^2}\bigg)^{p-1}+\tan\frac{\pi}{2p}-\frac{\cosh^p y}{\cos^p\frac{\pi}{2p}}\geq 0, \quad 0\leq y\leq \frac{\pi}{4\sqrt{p}}.$$
We use $\cosh \frac{py}{p-1}\geq 1+\frac{p^2y^2}{2(p-1)^2}.$\\
First, we prove that $f$ is decreasing. Namely, 
$$f'(y)=\frac{p^2y}{p-1}\bigg(1+\frac{p^2y^2}{2(p-1)^2}\bigg)^{p-2}-\frac{p\cosh^{p-1}y\sinh y}{\cos^p\frac{\pi}{2p}}$$
and $f'(y)\leq 0$ follows from 
$$A_p=\frac{p\cos^p\frac{\pi}{2p}}{p-1}\leq \frac{\cosh^{p-1}y\sinh y}{y\big(1+\frac{p^2y^2}{2(p-1)^2}\big)^{p-2}}.$$
We have
$$\cosh^{p-1} y\sinh y\geq \big(1+\frac{y^2}{2}\big)^{p-1}\sinh y\geq \big(1+\frac{y^2}{2}\big)^{p-1}\big(y+\frac{y^3}{6}\big),$$
which is $\geq A_py\big(1+\frac{p^2y^2}{2(p-1)^2}\big)^{p-2}$ if and only if
$$g(t):=(p-1)\log\bigg(1+\frac{t}{2}\bigg)+\log\bigg(1+\frac{t}{6}\bigg)-(p-2)\log\bigg(1+\frac{p^2t}{2(p-1)^2}\bigg)\geq \log A_p, \quad 0\leq t\leq \frac{\pi^2}{16p}.$$
But,
$$g'(t)=\frac{2(p^2t^2+(4p^2+p)t-4p^2+14p-4)}{(t+2)(t+6)(p^2t+2(p-1)^2)}$$
and from 
\begin{align*}&2\big(p^2t^2+(4p^2+p)t-4p^2+14p-4\big)\leq \frac{\pi^4}{256}+\frac{\pi^2}{16p}\big(4p^2+p\big)-4p^2+14p-4\\
&=-4p^2+p\bigg(14+\frac{\pi^2}{4}\bigg)+\frac{\pi^4}{256}+\frac{\pi^2}{16}-4=:l(p),
\end{align*}
we see that $l(p),$ as a quadratic polynomial, takes its maximal value on $[4,+\infty)$ in $p=4$ equal to $l(4)=-12+\frac{17\pi^2}{16}+\frac{\pi^4}{256}<0.$ Hence $g$ is decreasing for $p\geq 4$ and we are reduced to prove $g\big(\frac{\pi^2}{16p}\big)\geq \log A_p,$
i.e.
$$\frac{\big(1+\frac{\pi^2}{32p}\big)^{p-1}\big(1+\frac{\pi^2}{96p}
\big)}{\big(1+\frac{\pi^2p}{32(p-1)^2}\big)^{p-2}}\geq \frac{p}{p-1}\cos^p\frac{\pi}{2p}.$$
This is equivalent to
$$\frac{\big(1+\frac{\pi^2}{32p}\big)^{1-\frac{1}{p}}\big(1+\frac{\pi^2}{96p}\big)^{\frac{1}{p}}\big(1-\frac{1}{p}\big)^{\frac{1}{p}}}{\big(1+\frac{\pi^2p}{32(p-1)^2}\big)^{1-\frac{2}{p}}}\geq \cos\frac{\pi}{2p},$$
which, rewriting the left-hand side as
$e^{(1-\frac{1}{p})\log(1+\frac{\pi^2}{32p})+\frac{1}{p}\log(1+\frac{\pi^2}{96p})+\frac{1}{p}\log(1-\frac{1}{p})-(1-\frac{2}{p})\log(1+\frac{\pi^2p}{32(p-1)^2})}$
and using the estimate $e^x\geq 1+x,$ follows from the inequality
\begin{align*}
K(x):&=(1-x)\log\bigg(1+\frac{\pi^2}{32}x\bigg)+x\log\bigg(1+\frac{\pi^2}{96}x\bigg)+x\log\big(1-x\big)\\
&+(2x-1)\log\bigg(1+\frac{\pi^2x}{32(1-x)^2}\bigg)+1-\cos\frac{\pi x}{2}\geq 0,
\end{align*}
for $0\leq x=\frac{1}{p}\leq \frac{1}{4}.$
We use the inequalities $1-\cos\frac{\pi t}{2}\geq \frac{\pi^2t^2}{8}-\frac{\pi^4t^4}{384}$
and $\log(1+t)\geq t-\frac{t^2}{2},$
getting
\begin{align*}
K(x)&\geq L(x)=(1-x)\bigg(\frac{\pi^2}{32} x-\frac{\pi^4x^2}{2048}\bigg)+x\bigg(\frac{\pi^2}{96} x-\frac{\pi^4x^2}{18432}
\bigg)+x\log\big(1-x\big)\\
&+(2x-1)\log\bigg(1+\frac{\pi^2x}{32(1-x)^2}\bigg)+\frac{\pi^2x^2}{8}-\frac{\pi^4x^4}{384}.
\end{align*}
We have 
\begin{align*}
L''(x)&=\frac{\pi^4}{384}x-\frac{\pi^4}{32}x^2+\frac{\pi^2(640-3\pi^2)}{3072}+\frac{4-3x}{(1-x)^2}\\
&+\frac{4(32x+\pi^2-48)\big(32(1-x)^2+\pi^2x\big)+(2x-1)(128\pi^2-\pi^4)}{(32(1-x)^2+\pi^2x)^2}.
\end{align*}
After some straightforward calculations, we get:
$$(1-x)^2\big(32(1-x)^2+\pi^2x\big)^2L''(x)=\sum_{k=0}^{8}a_kx^k,$$
where $a_0>50, a_1>-800, a_2>3900, a_3>500, a_4>-23800, a_5>44800, a_6>-38800, \\a_7>17000, a_8>-3200.$
i.e.
\begin{align*}
&(1-x)^2\big(32(1-x)^2+\pi^2x\big)^2L''(x)\\
&>100\bigg(\frac{1}{2}-8x+39x^2+5x^3-238x^4+448x^5-388x^6+170x^7-32x^8\bigg)\\
&\geq 100\bigg(\frac{1}{2}-8x+39x^2+5x^3-238x^4\bigg)
\end{align*}
It is an easy calculus exercise to check that the last polynomial is positive on $[0,\frac{1}{4}],$
thus $L$ is convex, $L'(x)\geq L'(0)=0$ and $L(x)\geq L(0)=0.$ This concludes the proof of the monotonicity of $f.$\\
 
 Finally, we need to prove that $f(\frac{\pi}{4\sqrt{p}})\geq 0,$ i.e. 
 $$\bigg(1+\frac{\pi^2p}{32(p-1)^2}\bigg)^{p-1}+\tan\frac{\pi}{2p}\geq \frac{\cosh^p\frac{\pi}{4\sqrt{p}}}{\cos^p\frac{\pi}{2p}}.$$
 Since $\cosh\frac{\pi}{4\sqrt{p}}\leq e^{\frac{\pi^2}{32p}}$(an easy comparison of Taylor coefficients), it is enough to establish:
 $$\bigg(1+\frac{\pi^2p}{32(p-1)^2}\bigg)^{p-1}\geq e^{\frac{\pi^2}{32}}\bigg(1+\frac{1}{4p}+\frac{1}{4p^2}\bigg)$$
 and 
 $$1+\frac{1}{4p}+\frac{1}{4p^2}+e^{-\frac{\pi^2}{32}}\tan\frac{\pi}{2p}\geq \frac{1}{\cos^p\frac{\pi}{2p}}.$$

By taking logarithms to both sides of the first inequality and substitution $x=\frac{1}{p}\in[0,\frac{1}{4}]$, it is equivalent to:
$$h(x):=(1-x)\log\bigg(1+\frac{\pi^2x}{32(1-x)^2}\bigg)-\frac{\pi^2}{32}x-x\log\bigg(1+\frac{x}{4}+\frac{x^2}{4}\bigg)\geq 0.$$
We have:
$$(1-x)(x^2+x+4)^2(\pi^2x+32(1-x)^2)^2h''(x)=\sum_{k=0}^{8}b_kx^k,$$
where  $b_0>355, b_1>-5700, b_2>43800, b_3>-133700, b_4>152200, b_5>-87500, b_6>25000, \\b_7>-4900, b_8=2048$
and
\begin{align*}
&(1-x)(x^2+x+4)^2(\pi^2x+32(1-x)^2)^2h''(x)\\
&\geq 100\bigg(\frac{7}{2}-57x+438x^2-1337x^3+1522x^4-875x^5+250x^6-49x^7+20x^8\bigg)\\
&\geq 100\bigg(\frac{7}{2}-57x+438x^2-1337x^3+1300x^4\bigg),
\end{align*}
which is positive on $(0,\frac{1}{4}).$ Thus $h''(x)>0,$ implying $h'(x)\geq h'(0)=0$ and $h(x)\geq h(0)=0$
and the desired estimate follows. \\
  
For the second inequality, we use $\tan x\geq x+\frac{x^3}{3},$ therefore, by
$e^{-\frac{\pi^2}{32}}\frac{\pi}{2}>\frac{23}{20}$ and $e^{-\frac{\pi^2}{32}}\frac{\pi^3}{24}>\frac{9}{10},$ it is sufficient to show
$$1+\frac{7}{5p}+\frac{1}{4p^2}+\frac{9}{10p^3}\geq \frac{1}{\cos^p\frac{\pi}{2p}}.$$ 
This is equivalent to
$$k(x):=x\log\bigg(1+\frac{7x}{5}+\frac{x^2}{4}+\frac{9x^3}{10}\bigg)+\log\big(\cos\frac{\pi x}{2}\big)\geq 0.$$
We find
$$k''(x)=\frac{2Q(x)}{P(x)^2}-\frac{\pi^2}{4\cos^2\frac{\pi x}{2}},$$
where
$$P(x)=18x^3+5x^2+28x+20 \quad \text{and}\quad Q(x)=486x^5+270x^4+2041x^3+2440x^2+692x+500.$$ 
Since the sign of $k''$ is oposite to that of 
$$\tilde{k}(x):=\frac{P^2}{2Q}-\frac{4}{\pi^2}\cos^2\frac{\pi x}{2},$$
we will inspect the sign of $\tilde{k}.$
From
$$\tilde{k}'(x)=(2P'Q-PQ')\frac{P}{2Q^2}+\frac{2}{\pi}\sin \pi x,$$
we see that $\tilde{k}'(x)>0$ for those $x$ such that $2P'Q-Q'P\geq 0.$ But, it decreases on $(0,\frac{1}{4})$, hence, it has exactly one zero $x_0$ and $\tilde{k}'$ is positive on $(0,x_0).$  Since
\begin{align*}
2P'Q-PQ'&=8748x^7+7290x^6+69390x^5+122165x^4+7932x^3-58080x^2-68224x+14160\\
&\geq -58080x^2-68224x+14160 \geq -\frac{58080}{36}-\frac{68224}{6}+14160>0,
\end{align*}
we infer that $x_0>\frac{1}{6}$. On the other hand, for $x_0\leq x\leq\frac{1}{4},$ where $2P'Q-Q'P<0,$
we get:
\begin{align*}
\tilde{k}'(x)&=(2P'Q-PQ')\frac{P}{2Q^2}+\frac{2}{\pi}\sin \pi x \geq \frac{P(\frac{1}{4})}{2Q^2(\frac{1}{6})}(2P'Q-PQ')+2x-\frac{\pi^2x^3}{3}\\
&\geq 3\cdot10^{-5}(7932x^3-58080x^2-68224x+14160)+2x-\frac{\pi^2x^3}{6},
\end{align*}
which is easily seen to be positive.
Therefore, $\tilde{k}$ increases, and since $\tilde{k}(0)<0 $ and $\tilde{k}\big(\frac{1}{4}\big)>0,$
$k''$ is positive till some point, then negative, $k'$ increases and then decreases, $k'(0)=0,$ $k'\big(\frac{1}{4}\big)>0,$ giving that $k$ increases on $[0,\frac{1}{4}]$ and $k(x)\geq k(0)=0.$ This concludes the proof.\\

\end{proof}

\section{Ethics declarations}

\textbf{Conflict of interest.} The author declares that he has not conflict of interest.\\

\textbf{Data statement.} Data sharing not applicable to this article as no datasets were generated or analysed during the current study.



\begin{thebibliography}{10}

\bibitem{DUREN.BOOK}
P. Duren,  \textit{Theory of $H^p$ spaces}, Academic Press,  New York and London, 1970.

\bibitem{HV.JFA}
B. Hollenbeck and I.E. Verbitsky,  \textit{Best constants for the Riesz projection},  J. Funct. Anal. \textbf{175} (2000), 370--392.

\bibitem{HV.OTAA}
B. Hollenbeck and I.E. Verbitsky, \textit{Best constant inequalities involving the analytic and co-analytic projection},
Oper. Theory   Adv. Appl. \textbf{202} (2010),   285--295.

\bibitem{GARNETT.BOOK}
J. B. Garnett, \textit{Bounded Analytic Functions},  Springer, New York,  2007.

\bibitem{GOHBERGKRUPNIK}
I. Gohberg and N. Krupnik, \textit{Norm of the Hilbert transformation in the $L^p$ space}, Funct. Anal. Pril. \textbf{2}(1968), 91-92 [in Russian]; English transl. in Funct. Anal. Appl. \textbf{2}(1968), 180-181.

\bibitem{GRAFAKOS.MRL}
L. Grafakos,  \textit{Best bounds for the Hilbert transform on $L^p(R^1)$},  Math. Res. Lett. \textbf{4} (1997),  469--471.

\bibitem{JAGUZOVIC}
V. Jaguzovi\'{c},  \textit{On Hollenbeck-Verbitsky conjecture for $\frac{4}{3}<p<2$}, arXiv:2408.17093 


\bibitem{KALAJ.TAMS}
D. Kalaj, \textit{On Riesz type inequalities for harmonic mappings on the unit disk}, Trans. Amer. Math. Soc. \textbf{372} (2019), 4031--4051.

\bibitem{KALAJ.POTA}
D. Kalaj, \textit{On M. Riesz Conjugate Function Theorem for Harmonic Functions}, Potential Anal., (2024), https://doi.org/10.1007/s11118-024-10150-8


\bibitem{MELENTIJEVIC.PHD}
P. Melentijevi\'{c},  \textit{Estimates of gradients and operator norm estimates in harmonic function theory},
PhD thsis, Belgrade 2018.

\bibitem{PETAR}
P. Melentijevi\'{c},  \textit{Hollenbeck-Verbitsky conjecture on best constant inequalities for analtic and coanalytic projections}, Math. Ann.  \textbf{388}(2024), 4405-4448.

\bibitem{PETARMARIJAN}
P. Melentijevi\'{c}, M. Markovi\'{c}, \textit{Best constant in inequalities involving analytic and co-analytic projections and Riesz theorem for various function spaces}, Potential Anal. \textbf{59}(2023), 1599-1620.

\bibitem{PAVLOVIC.BOOK}
M. Pavlovi\'{c}, \textit{Function theory in the unit disk}, DeGruter, 2014.

\bibitem{PICHORIDES.STUDIA}
S.K. Pichorides, \textit{On the best values of the constants in the theorems of M. Riesz, Zygmund and Kolmogorov},
Stud.  Math. \textbf{44} (1972), 165--179.

\bibitem{RUDIN.BOOK}
W. Rudin, \textit{Real and Complex Analysis},  New York,  McGraw--Hill, 1987.

\bibitem{RUDIN.TAMS}
W. Rudin,  \textit{Analytic functions of class Hp}, Trans. Amer. Math. Soc.  \textbf{78} (1955),  46--66.

\bibitem{VERBITSKY.ISSLED}
I.E. Verbitsky,  \textit{Estimate of the norm of a function in a Hardy Space in terms of the norms of its real and imaginary parts},
Amer. Math. Soc. Transl.  \textbf{24} (1984),  11--15.

\end{thebibliography}
\end{document}